\numberwithin{equation}{section}
\theoremstyle{plain}
\newtheorem{thm}{Theorem}
\newtheorem{cor}{Corollary}
\newtheorem{LemmaQR}{Lemma QR\hspace{-1ex}}
\newtheorem{LemmaMIQ}{Lemma MIQ\hspace{-0.5ex}}
\newtheorem{AssumptionMON}{Assumption MON\hspace{-0.5ex}}
\newtheorem{AssumptionIQM}{Assumption IQM\hspace{-0.5ex}}
\newtheorem{AssumptionQR}{Assumption QR\hspace{-0.5ex}}
\theoremstyle{definition}
\newtheorem{defn}{Definition}
\theoremstyle{remark}
\newtheorem{rem}{Remark}
\begin{document}
\title[Uniform Asymptotics ]{Uniform Asymptotics for Nonparametric Quantile
Regression with an Application to Testing Monotonicity}
\thanks{Lee's work was supported by the European Research Council
(ERC-2009-StG-240910-ROMETA) and by the National Research  Foundation of Korea Grant funded by the Korean 
Government (NRF-2012S1A5A8023573).
Song acknowledges the financial support of
Social Sciences and Humanities Research Council of Canada. Whang's work was
supported by the SNU Creative Leading Researcher Grant.}
\date{\today}
\author[Lee]{Sokbae Lee$^1$ $^2$}
\address{$^1$Department of Economics, Seoul National University, 1
Gwanak-ro, Gwanak-gu, Seoul, 151-742, Republic of Korea.}
\address{$^2$Centre for Microdata Methods and Practice, Institute for Fiscal
Studies, 7 Ridgmount Street, London, WC1E 7AE, UK.}
\email{sokbae@snu.ac.kr}
\author[Song]{Kyungchul Song$^3$}
\address{$^3$Vancouver School of Economics, University of British Columbia,
997 - 1873 East Mall, Vancouver, BC, V6T 1Z1, Canada}
\email{kysong@mail.ubc.ca}
\author[Whang]{Yoon-Jae Whang$^1$}
\email{whang@snu.ac.kr}

\begin{abstract}
In this paper, we establish a uniform error rate of a Bahadur representation
for local polynomial estimators of quantile regression functions. The error
rate is uniform over a range of quantiles, a range of evaluation points in
the regressors, and over a wide class of probabilities for observed random
variables. Most of the existing results on Bahadur representations for local
polynomial quantile regression estimators apply to the fixed data generating
process. In the context of testing monotonicity  where the null
hypothesis is of a complex composite hypothesis, it is particularly relevant
to establish Bahadur expansions that hold uniformly over a large class of
data generating processes. In addition, we establish the same error rate for
bootstrap local polynomial estimators which can be useful for various
bootstrap inference. As an illustration,  we apply to testing monotonicity of quantile regression and
present Monte Carlo experiments based on this example. \bigskip

{\footnotesize \noindent \textsc{Key words.} Bootstrap, conditional moment
inequalities, kernel estimation, local polynomial estimation, $L_p$ norm,
nonparametric testing, Poissonization, quantile regression, testing
monotonicity, uniform asymptotics } \bigskip

\end{abstract}

\maketitle

\onehalfspacing


\clearpage

\section{Introduction}

In this paper, we establish a Bahadur representation of a local polynomial
estimator of a nonparametric quantile regression function that is uniform
over a range of quantiles, a range of evaluation points in the regressors,
and a wide class of probabilities underlying the distributions of observed
random variables. We also establish a Bahadur representation for a bootstrap
estimator of a nonparametric quantile regression function.

There are several existing results of Bahadur representation of the local
polynomial quantile regression estimator in the literature. %
\citeasnoun{Chaudhuri:91a} is the classical result on a local polynomial
quantile regression estimator with a uniform kernel. His result is pointwise
in that the representation holds at one quantile, for a fixed point, and for
a given data generating process. For recent contributions that are closely
related to this paper, see \citeasnoun{Kong/Linton/Xia:10}, %
\citeasnoun{Guerre/Sabbah:12}, \citeasnoun{Kong/Linton/Xia:13}, and %
\citeasnoun{Qu:Yoon:15}, among others. \citeasnoun{Kong/Linton/Xia:10}
obtain a Bahadur representation for a local polynomial M-estimator,
including the quantile regression estimator as a special case, for strongly
mixing stationary processes. Their result holds uniformly for a range of
evaluation points in the regressors but at a fixed quantile for a given data
generating process. \citeasnoun{Guerre/Sabbah:12} obtain Bahadur
representations that hold uniformly over a range of quantiles, a range of
evaluation points in the regressors, and a range of bandwidths for
independent and identically distributed (i.i.d.) data. However, their result
is for a fixed data generating process. \citeasnoun{Kong/Linton/Xia:13}
extend to the case when the dependent variable is randomly censored for
i.i.d. data and obtain the representation that is uniform over the
evaluation points. \citeasnoun{Qu:Yoon:15} consider estimating the
conditional quantile process nonparametrically for the i.i.d. data using
local linear regression, with a focus on quantile monotonicity. They develop
a Bahadur representation that is uniform in the quantiles but at a fixed
evaluation point for a given data generating process. It seems that our work
is the first that obtains a Bahadur representation that holds uniformly over
the triple: the quantile, the evaluation point, and the underlying
distribution. However, our result is for a fixed bandwidth, unlike %
\citeasnoun{Guerre/Sabbah:12}.

The most distinctive feature of our result is that the Bahadur
representation is uniform over a wide class of probabilities. Uniformity of
asymptotic approximation in probabilities has long drawn interest in
statistical decision theory and empirical process theory. Uniformity in
asymptotic approximation is generally crucial for procuring finite sample
stability of size or coverage probability in inference. See %
\citeasnoun{Andrews/Cheng/Guggenberger:11} for its emphasis and general
tools for uniform asymptotic results. In the recent literature of
econometrics, identifying the class of probabilities for which the
uniformity holds, and their plausibility in practice, have received growing
attention, along with increasing interests in models based on inequality
restrictions. See, for example, \citeasnoun{Andrews/Soares:10} and %
\citeasnoun{Andrews/Shi:13a} among many others.

To see the issue of uniformity, consider a simple testing problem: 
\begin{align}  \label{simple-example}
H_{0}:\forall x\in \mathcal{X},\ \frac{\partial \mathrm{Med}\left[ Y|X=x%
\right] }{\partial x} \geq 0\;vs.\;H_{1}:\exists x\in \mathcal{X},\ \frac{%
\partial \mathrm{Med}\left[ Y|X=x\right] }{\partial x} <0,
\end{align}
where $\mathrm{Med}\left[ Y|X=x\right]$ is the conditional median of $Y$
given $X=x$ and $\mathcal{X}$ is a region of interest. Then one may develop
a nonparametric test statistic using the local polynomial quantile
regression estimator (e.g. the $L_p$ statistic as in Section \ref%
{test-mono-qr}). The behavior of this nonparametric test statistic depends
crucially on the contact set $B := \{x \in \mathcal{X}: \ {\partial \mathrm{%
Med}\left[ Y|X=x\right] }/{\partial x} = 0 \}$. To emphasize the issue of
uniformity, consider a sequence of data generating processes indexed by $n$.
For example, we take the sequence of the true conditional median functions
to be $\mathrm{Med}_n\left[ Y|X=x\right] = x^3/n$ on $\mathcal{X} = [-1,1]$.
Then for each $n$, the corresponding contact set is a singleton set, that is 
$B_n = \{ 0 \}$; however, $\mathrm{Med}_n\left[ Y|X=x\right] = x^3/n$
converges to $0$ uniformly in $x \in \mathcal{X}$ as $n \rightarrow \infty$.
In other words, as $n$ gets large, the true function looks flat on $\mathcal{%
X}$, but the population contact set is always the singleton set at zero for each $n$.
This suggests that the pointwise asymptotic theory may not be adequate for
finite sample approximation. Therefore, it is important to develop uniform
asymptotics for the local polynomial quantile regression estimator by
establishing the Bahadur representation that is uniform over a large class
of probabilities.

We illustrate the usefulness of  our Bahadur representation by applying it to testing monotonicity of quantile regression that includes \eqref{simple-example} as a special case. Our proposed test uses 
  the  framework of \citeasnoun[LSW hereafter]{LSW}. They provide a general method of testing
inequality restrictions for nonparametric functions, and make use of this
paper's result in establishing sufficient conditions for one of their
results.

The remainder of the paper is as follows. Section \ref{sec:main-results}
presents the main results of the paper, Section \ref{test-mono-qr} gives an application of our main results in the context of 
testing monotonicity, Section %
\ref{sec:conclusion} concludes, and Section \ref{sec:proofs} gives all the
proofs.

\section{Uniform Asymptotics}

\label{sec:main-results}

This section provides uniform Bahadur representations 
for local polynomial quantile regression estimators 
and considers their bootstrap version as well.

\subsection{Uniform Bahadur Representation for Local Polynomial Quantile
Regression Estimators}

In this subsection, we present a Bahadur representation of a local polynomial
quantile regression estimator that can be useful for a variety of purposes.

Let $(B^{\top },X^{\top },L)^{\top }$, with $B\equiv (B_{1},...,B_{\bar{L}%
})^{\top }\in \mathbf{R}^{\bar{L}}$, and $X\in \mathbf{R}^{d}$, be a random
vector such that the joint distribution of $(B^{\top },X^{\top })^{\top }$
is absolutely continuous with respect to Lebesgue measure and $L$ is a
discrete random variable taking values from $\mathbb{N}_{L}\equiv \{1,2,...,%
\bar{L}\}$. 
For
each $x\in \mathbf{R}^{d}$ and $k\in \mathbb{N}_{L}$, we assume that the conditional
distribution of $B_{l}$ given $(X,L)=(x,k)$ is the same across $l=1,...,k$.
It is unconventional to consider a vector $B$, but it is useful to do so
here to cover the case where we observe multiple outcomes from the same conditional distribution.

Let $q_{k}(\tau |x)$ denote the $\tau $-th quantile of $B_{l}$ conditional
on $X=x$ and $L=k,$ where $\tau \in (0,1)$. That is, $P\{B_{l}\leq
q_{k}(\tau |x)|X=x,L=k\}=\tau $ for all $x$ in the support of $X$ and all $%
k\in \{1,...,\bar{L}\}$. We write%
\begin{equation*}
B_{l}=q_{k}(\tau |X)+\varepsilon _{\tau ,lk},\text{ }\tau \in (0,1),\text{
for all }k\in \{1,...,\bar{L}\},
\end{equation*}%
where $\varepsilon _{\tau ,lk}$ is a continuous random variable such that
the $\tau $-th conditional quantile of $\varepsilon _{\tau ,lk}$ given $X$
and $L=k$ is equal to zero.

Suppose that we are given a random sample $\{(B_{i}^{\top },X_{i}^{\top
},L_{i})^{\top }\}_{i=1}^{n}$ of $(B^{\top },X^{\top },L)^{\top }$.\footnote{%
In fact, the estimator allows that we do not observe the whole vector $B_{i}$%
, but observe only $B_{1i},...,B_{ki}$ whenever $L_{i}=k$.} Assume that $%
q_{k}(\tau |x)$ is $(r+1)$-times continuously differentiable with respect to 
$x$, where $r\geq 1$. We use a local polynomial estimator $\widehat{q}%
_{k}(\tau |x)$, similar to \citeasnoun{Chaudhuri:91a}. For $u\equiv
(u_{1},\ldots ,u_{d})$, a $d$-dimensional vector of nonnegative integers,
let $[u]=u_{1}+\cdots +u_{d}$. Let $A_{r}$ be the set of all $d$-dimensional
vectors $u$ such that $[u]\leq r$, and let $|A_{r}|$ denote the number of
elements in $A_{r}$. For $z=(z_{1},...,z_{d})^{\top }\in \mathbf{R}^{d}$
with $u=(u_{1},...,u_{d})^{\top }\in A_{r}$, let $z^{u}=%
\prod_{m=1}^{d}z_{m}^{u_{m}}$. Now define $c(z)=(z^{u})_{u\in A_{r}},$ for $%
z\in \mathbf{R}^{d}$. Note that $c(z)$ is a vector of dimension $|A_{r}|$.
For $u=(u_{1},...,u_{d})^{\top }\in A_{r}$, and $r+1$ times differentiable
map $f$ on $\mathbf{R}^{d}$, we define the following derivative:%
\begin{equation*}
(D^{u}f)(x)\equiv \frac{\partial ^{\lbrack u]}}{\partial x_{1}^{u_{1}}\cdot
\cdot \cdot \partial x_{d}^{u_{d}}}f(x),
\end{equation*}%
where $[u]=u_{1}+\cdot \cdot \cdot +u_{d}$. Then we define $\gamma _{\tau
,k}(x)\equiv \left( \gamma _{\tau ,k,u}(x)\right) _{u\in A_{r}}$, where%
\begin{equation*}
\gamma _{\tau ,k,u}(x)\equiv \frac{1}{u_{1}!\cdot \cdot \cdot u_{d}!}%
D^{u}q_{k}(\tau |x).
\end{equation*}

We construct an estimator $\hat{\gamma}_{\tau ,k}(x)$ as follows. First, we
define for each $\gamma \in \mathbf{R}^{|A_{r}|},$ 
\begin{equation*}
S_{n,x,\tau ,k}(\gamma )\equiv \sum_{i=1}^{n}1\{L_{i}=k\}\sum_{\ell
=1}^{L_{i}}l_{\tau }\left[ B_{\ell i}-\gamma ^{\top }c\left( \frac{X_{i}-x}{h%
}\right) \right] K\left( \frac{x-X_{i}}{h}\right) .
\end{equation*}%
Then we construct%
\begin{equation}
\hat{\gamma}_{\tau ,k}(x)\equiv \text{argmin}_{\gamma \in \mathbf{R}%
^{|A_{r}|}}S_{n,x,\tau ,k}(\gamma ),  \label{argmin}
\end{equation}%
where $l_{\tau }(u)\equiv u[\tau -1\{u\leq 0\}]$ for any $u\in \mathbf{R}$, $%
K_{h}(t)=K(t/h)/h^{d}$, $K$ is a $d$-variate kernel function, and $h$ is a
bandwidth that goes to zero as $n\rightarrow \infty $.

In order to reduce the redundancy of the statements, let us introduce the
following definitions.

\begin{defn}
Let $\mathcal{G}$ be a set of functions $g_{v}:\mathbf{R}^{m}\rightarrow 
\mathbf{R}^{s}\ $indexed by a set $\mathcal{V}$, and let $S\subset \mathbf{R}%
^{m}$ be a given set and for $\varepsilon >0$, let $S_{v}(\varepsilon )$ be
an $\varepsilon $-enlargement of $S_{v}=\{x\in S:(x,v)\in S\times V\}$,
i.e., $S_{v}(\varepsilon )=\{x+a:x\in S$ and $a\in \lbrack -\varepsilon
,\varepsilon ]^{m}\}.$ Then we define the following conditions for $\mathcal{%
G}$:\medskip \newline
\noindent (a) B($S,\varepsilon $): $g_{v}$ is bounded on $S_{v}(\varepsilon
)\ $uniformly over $v\in \mathcal{V}.$\newline
\noindent (b) BZ$(S,\varepsilon $): $g_{v}$ is bounded away from zero on $%
S_{v}(\varepsilon )\ $uniformly over $v\in \mathcal{V}.$\newline
\noindent (c) BD($S,\varepsilon ,r$): $\mathcal{G}$ satisfies B$%
(S,\varepsilon )$ and $g_{v}$ is $r$ times continuously differentiable on $%
S_{v}(\varepsilon )$ with derivatives bounded on $S_{v}(\varepsilon )$
uniformly over $v\in \mathcal{V}.$\newline
\noindent (d) BZD($S,\varepsilon ,r$): $\mathcal{G}$ satisfies BZ$%
(S,\varepsilon )$ and $g_{v}$ is $r$ times continuously differentiable on $%
S_{v}(\varepsilon )$ with derivatives bounded on $S_{v}(\varepsilon )$
uniformly over $v\in \mathcal{V}.$\newline
\noindent (e) LC: $g_{v}$ is Lipschitz continuous with Lipschitz coefficient
bounded uniformly over $v\in V$.
\end{defn}

Let $\mathcal{P}$ denote the collection of the potential joint distributions
of $(B^{\top },X^{\top },L)^{\top }$ and define $\mathcal{V}=\mathcal{T}%
\times \mathcal{P}$, and for each $k\in \mathbb{N}_{L},$%
\begin{eqnarray}
\mathcal{G}_{q}(k) &=&\left\{ q_{k}(\tau |\cdot ):(\tau ,P)\in \mathcal{V}%
\right\} ,  \label{fns} \\
\mathcal{G}_{f}(k) &=&\left\{ f_{\tau ,k}(\cdot |\cdot ):(\tau ,P)\in 
\mathcal{V}\right\} ,  \notag \\
\mathcal{G}_{L}(k) &=&\left\{ P\left\{ L_{i}=k|X_{i}=\cdot \right\} :P\in 
\mathcal{P}\right\} ,\text{ and}  \notag \\
\mathcal{G}_{f} &=&\left\{ f(\cdot ):P\in \mathcal{P}\right\} ,  \notag
\end{eqnarray}%
where $f_{\tau ,k}(0|x)$ being the conditional density of $B_{li}-q_{k}(\tau
|X_{i})$ given $X_{i}=x$ and $L_{i}=k.$ Also, define%
\begin{equation}
\mathcal{G}_{f,2}(k)=\left\{ f_{\cdot ,k}(\cdot |\cdot ):P\in \mathcal{P}%
\right\} \text{ and\ }\mathcal{G}_{\gamma }(k)=\left\{ \gamma _{\cdot
,k}(\cdot ):P\in \mathcal{P}\right\} .  \label{fns2}
\end{equation}
In other words, $\mathcal{G}_{f,2}(k)$ is the class of conditional densities $f_{\tau,k}(\cdot|x)$ indexed by $\tau$, $x$, and probabilities $P$, and $\mathcal{G}_\gamma(k)$ is the class of functions $\gamma_{\tau,k}(\cdot)$ indexed by $\tau$ and probabilities $P$. We make the following assumptions.

\begin{AssumptionQR}
(i) $\mathcal{G}_{f}$ satisfies BD($\mathcal{S},\varepsilon ,1)$.\newline
\noindent (ii) For each $k\in \mathbb{N}_{L}$, $\mathcal{G}_{f}(k)$ and $%
\mathcal{G}_{L}(k)\ $satisfy both BD($\mathcal{S},\varepsilon ,1)$ and BZD($%
\mathcal{S},\varepsilon ,1).$\newline
\noindent (iii) For each $k\in \mathbb{N}_{L}$, $\mathcal{G}_{q}(k)\ $%
satisfies BD($\mathcal{S},\varepsilon ,r+1)$ for some $r\geq 1.$\newline
\noindent (iv) For each $k\in \mathbb{N}_{L}$, $\mathcal{G}_{f,2}(k)$ and $%
\mathcal{G}_{\gamma }(k)\ $satisfy LC.
\end{AssumptionQR}

Assumptions QR1(i) and (iii) are standard assumptions used in the local
polynomial approach where one approximates $q_{k}(\cdot |x)$ by a linear
combination of its derivatives through Taylor expansion, except only that
the approximation here is required to behave well uniformly over $P\in 
\mathcal{P}$. Assumption QR1(ii) is made to prevent the degeneracy of the
asymptotic linear representation of $\hat{\gamma}_{\tau ,k}(x)-\gamma _{\tau
,k}(x)$ that is uniform over $x\in \mathcal{S}_{\tau }(\varepsilon ),\ \tau
\in \mathcal{T}$ and over $P\in \mathcal{P}$. Assumption QR1(iv) requires
that the conditional density function of $B_{li}-q_{k}(\tau |X_{i})$ given $%
X_{i}=x$ and $L_{i}=k$ and $\gamma _{\tau ,k}(\cdot )$ behave smoothly as we
perturb $\tau $ locally. This requirement is used to control the size of the
function spaces indexed by $\tau $, so that when the stochastic convergence
of random sequences holds, it is ensured to hold uniformly in $\tau $.

Let $||\cdot ||$ denote the Euclidean norm throughout the paper. Assumption
QR2 lists conditions for the kernel function and the bandwidth.

\begin{AssumptionQR}
(i) $K$ is compact-supported, bounded, and Lipschitz continuous
on the interior of its support, $\int K(u)du=1$, and $\int K\left( u\right)
||u||^{2}du>0.$\newline
(ii) As $n\rightarrow \infty ,\ n^{-1/2}h^{-d/2}\log n+\sqrt{nh^{d}}h^{r+1}/%
\sqrt{\log n}\rightarrow 0,$ with $r$ in Assumption QR1(iii).
\end{AssumptionQR}
Assumption QR2 gives conditions for the kernel and the bandwidth. The condition for the bandwidth is satisfied if we take $h = Cn^{-s}$ for some constant $C$ with $s>0$ satisfying that $1/(d+2(r+1))<s<1/d$.
  
For any sequence of real numbers $b_{n}>0$, and any sequence of random
vectors $Z_{n} $, we say that $Z_{n}/b_{n}\rightarrow _{P}0,$ $\mathcal{P}$%
-uniformly, or $Z_{n}=o_{P}(b_{n}),$ $\mathcal{P}$-uniformly, if for any $%
a>0 $, 
\begin{equation*}
\underset{n\rightarrow \infty }{\limsup}\sup_{P\in \mathcal{P}%
}P\left\{ ||Z_{n}||>ab_{n}\right\} =0\text{.}
\end{equation*}%
Similarly, we say that $Z_{n}=O_{P}(b_{n})$, $\mathcal{P}$-uniformly, if for
any $a>0$, there exists $M>0$ such that 
\begin{equation*}
\underset{n\rightarrow \infty }{\limsup}\sup_{P\in \mathcal{P}%
}P\left\{ ||Z_{n}||>Mb_{n}\right\} <a\text{.}
\end{equation*}

Below, we establish a uniform Bahardur representation of $\sqrt{nh^{d}}H(%
\hat{\gamma}_{\tau ,k}(x)-\gamma _{\tau ,k}(x))$, where $H=\ $diag$%
((h^{|u|})_{u\in A_{r}})$ is the $|A_{r}|\times |A_{r}|$ diagonal
matrix. First we introduce some notation. We define%
\begin{eqnarray*}
\Delta _{x,\tau ,lk,i} &\equiv &B_{li}-\gamma _{\tau ,k}^{\top
}(x)c(X_{i}-x), \\
c_{h,x,i} &\equiv &c\left( (X_{i}-x)/h\right) ,\text{ and }K_{h,x,i}\equiv
K\left( (X_{i}-x)/h\right) .
\end{eqnarray*}%
Let%
\begin{equation*}
M_{n,\tau ,k}(x)\equiv k\int P\{L_{i}=k|X_{i}=x+th\}f_{\tau
,k}(0|x+th)f(x+th)K(t)c(t)c^{\top }(t)dt.
\end{equation*}

\begin{thm}
\textit{Suppose that Assumptions QR1-QR2 hold. Then, for each }$k\in \mathbb{%
N}_{L},$%
\begin{eqnarray*}
&&\sup_{\tau \in \mathcal{T},x\in \mathcal{S}_{\tau }(\varepsilon
)}\left\Vert \sqrt{nh^{d}}H(\hat{\gamma}_{\tau ,k}(x)-\gamma _{\tau
,k}(x))-M_{n,\tau ,k}^{-1}(x)\left(\psi _{n,x,\tau ,k}-\mathbf{E}\psi _{n,x,\tau ,k}\right)\right\Vert \\
&=&O_{P}\left( \frac{\log ^{1/2}n}{n^{1/4}h^{d/4}}\right) ,\ \mathcal{P}%
\text{\textit{-uniformly},}
\end{eqnarray*}%
\textit{where, with }$\tilde{l}_{\tau }\left( x\right) \equiv \tau -1\{x\leq
0\}$,%
\begin{equation*}
\psi _{n,x,\tau ,k}\equiv -\frac{1}{\sqrt{nh^{d}}}\sum_{i=1}^{n}1\left\{
L_{i}=k\right\} \sum_{l=1}^{L_{i}}\tilde{l}_{\tau }\left( \Delta _{x,\tau
,lk,i}\right) c_{h,x,i}K_{h,x,i}\text{.}
\end{equation*}
\end{thm}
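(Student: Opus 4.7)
The plan is to adapt the convexity-based Bahadur representation argument of Chaudhuri, localized and combined with $\mathcal{P}$-uniform empirical process bounds. After rescaling, set $\beta = \sqrt{nh^{d}}(\hat\gamma_{\tau,k}(x) - H\gamma_{\tau,k}(x))$ and consider the centered convex criterion $Q_{n}(\beta; x,\tau,k) \equiv S_{n,x,\tau,k}(H\gamma_{\tau,k}(x) + \beta/\sqrt{nh^{d}}) - S_{n,x,\tau,k}(H\gamma_{\tau,k}(x))$, whose minimizer corresponds (after reconciling the $H$-scaling between the estimator's design vector $c((X_i-x)/h)$ and the coefficient vector $\gamma_{\tau,k}(x)$) to $\sqrt{nh^{d}}H(\hat\gamma_{\tau,k}(x) - \gamma_{\tau,k}(x))$. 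Applying Knight's identity $l_\tau(u-v) - l_\tau(u) = -v\tilde{l}_\tau(u) + \int_{0}^{v}[1\{u\le s\} - 1\{u\le 0\}]\,ds$ with $u = \Delta_{x,\tau,lk,i}$ and $v = \beta^{\top}c_{h,x,i}/\sqrt{nh^{d}}$ decomposes $Q_{n}(\beta)$ into a linear term $-\beta^{\top}\psi_{n,x,\tau,k}$ plus a nonnegative integral-of-indicator remainder $R_{n}(\beta; x,\tau,k)$.

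Next, I would show that $R_{n}(\beta) \approx \tfrac{1}{2}\beta^{\top} M_{n,\tau,k}(x)\beta + \beta^{\top}\mathbf{E}\psi_{n,x,\tau,k}$ uniformly over $\|\beta\|\le M_{n}$, where $M_{n} = O(\sqrt{\log n})$ is a preliminary uniform bound on $\|\hat\beta\|$ obtained from a score bound and the same type of maximal inequality used below. The conditional expectation of the integrand given $\{(X_{i},L_{i})\}$ equals a difference of conditional CDFs $F_{\tau,k}(\gamma_{\tau,k}^{\top}(x)c(X_{i}-x)+s\mid X_{i}) - F_{\tau,k}(\gamma_{\tau,k}^{\top}(x)c(X_{i}-x)\mid X_{i})$, which by QR1(i)(ii) and the Taylor expansion of $q_{k}(\tau|\cdot)$ at $x$ (controlled by QR1(iii)) reduces to $f_{\tau,k}(0|X_{i})\,s$ to leading order. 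Summing, integrating in $s$, and invoking the smoothness/positivity conditions of QR1 together with QR2 yields the quadratic $\tfrac{1}{2}\beta^{\top}M_{n,\tau,k}(x)\beta$, while the Taylor-truncation bias is absorbed into $\mathbf{E}\psi_{n,x,\tau,k}$.

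The crux is bounding $\sup_{\|\beta\|\le M_{n}}|R_{n}(\beta) - \mathbf{E}[R_{n}(\beta)\mid (X_{i},L_{i})]|$ at rate $\log^{1/2}n/(n^{1/4}h^{d/4})$, $\mathcal{P}$-uniformly. This is a quadratic modulus of continuity for the localized empirical process indexed by the classes $\{(b,x',\ell)\mapsto [1\{b - \gamma_{\tau,k}^{\top}(x)c(x'-x)\le s\} - 1\{b - \gamma_{\tau,k}^{\top}(x)c(x'-x)\le 0\}]\cdot 1\{\ell=k\}K_{h,x}(x')c_{h,x}(x')\}$ as $|s|$ ranges over an interval of length $O(M_{n}/\sqrt{nh^{d}})$. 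For each index these indicators are VC-subgraph (an affine half-space in $(b,x')$), the multiplier $Kc_{h,x}$ has uniformly bounded envelope, and by QR1(i)(ii)(iv) each summand has conditional variance $O(|s|h^{d})$ with constants independent of $P$, giving an $L_{2}(P)$-diameter of order $M_{n}^{1/2}h^{d/4}/n^{1/4}$. A Talagrand-type concentration inequality for uniformly VC classes, whose constants depend only on envelope and entropy, therefore delivers the claimed rate $\mathcal{P}$-uniformly. Once this uniform approximation of $Q_{n}$ is in hand, the standard convex-minimizer lemma (using that $M_{n,\tau,k}(x)$ is uniformly bounded below by QR1(i)(ii) and QR2) transfers the rate from objective to minimizer, yielding the Bahadur form with leading term $M_{n,\tau,k}^{-1}(x)(\psi_{n,x,\tau,k} - \mathbf{E}\psi_{n,x,\tau,k})$. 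The main obstacle is precisely the $\mathcal{P}$-uniform Talagrand step: verifying that all envelope, variance, and entropy constants are inherited from Assumption QR1/QR2 rather than from the particular $P$.
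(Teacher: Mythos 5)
Your overall architecture matches the paper's: Knight's identity splits the recentred convex criterion into a linear score term plus a nonnegative remainder, the remainder's expectation is approximated by the quadratic $\tfrac12\beta^{\top}M_{n,\tau,k}(x)\beta$, a maximal inequality controls the centred remainder, and Pollard's convexity device passes from objective to minimizer. The paper uses bracketing entropy with Massart's Theorem 6.8 where you propose VC-subgraph classes with a Talagrand-type bound; either can be made $\mathcal{P}$-uniform under QR1--QR2, so that choice is immaterial.

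The genuine gap is the localization scale in your ``crux'' step, and it makes the claimed rate unobtainable as written. You propose to bound $\sup_{\|\beta\|\le M_{n}}|R_{n}(\beta)-\mathbf{E}[R_{n}(\beta)\mid\cdot]|$ at rate $\log^{1/2}n/(n^{1/4}h^{d/4})$ with $M_{n}=O(\sqrt{\log n})$ and increments $|s|=O(M_{n}/\sqrt{nh^{d}})$. But each summand of $R_{n}(\beta)$ is bounded by $|v|\,1\{|u|\le|v|\}K_{h,x,i}$ with $|v|=O(M_{n}/\sqrt{nh^{d}})$, so its variance is $O(M_{n}^{3}h^{d}(nh^{d})^{-3/2})$ and the maximal inequality returns $O(M_{n}^{3/2}\sqrt{\log n}/(n^{1/4}h^{d/4}))=O(\log^{5/4}n/(n^{1/4}h^{d/4}))$ over the full $M_{n}$-ball --- larger than your target by $\log^{3/4}n$. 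Worse, even granting a fluctuation bound $\epsilon_{n}$ on the objective over the whole ball, the convex-minimizer transfer only yields $\|\hat{u}-\tilde{u}\|=O(\sqrt{\epsilon_{n}})$. The paper instead runs a two-scale argument: it centres the process at the random point $\tilde{u}_{n,x,\tau}=-M_{n,\tau,k}^{-1}(x)\psi_{n,x,\tau,k}$, which lies in a ball of radius $\delta_{1n}=M\sqrt{\log n}$ by Lemma QR1(ii), and controls only the increment process $\zeta^{\Delta}_{n,x,\tau,k}(a,b)$ for $\|a\|\le\delta_{1n}$ and $\|b\|\le\delta_{2n}=M\sqrt{\log n}/(n^{1/4}h^{d/4})$. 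The key point --- the improvement over Guerre and Sabbah highlighted in Remark 1 --- is that this increment has variance of order $\delta_{2n}^{3}$, cubic in the small radius $b$ and uniform in the large offset $a$, so the maximal inequality gives $O(\delta_{2n}^{3/2}\sqrt{\log n}/(n^{1/4}h^{d/4}))=o(\delta_{2n}^{2})$, and the convexity inequality $C\delta_{2n}^{2}\le\Delta_{n,k}(\delta_{2n})$ then forces $\|\hat{u}-\tilde{u}\|<\delta_{2n}$ with probability tending to one, uniformly in $P$. Without this second localization your argument cannot reach the stated rate. By contrast, the $\mathcal{P}$-uniformity you flag as the main obstacle is routine bookkeeping once QR1--QR2 are used to make all envelope, density, and entropy constants free of $P$.
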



The proof in this paper uses the convexity arguments of %
\citeasnoun{Pollard:91} (see \citeasnoun{Kato:09} for a related recent
result) and, similarly as in \citeasnoun{Guerre/Sabbah:12}, employs the
maximal inequality of \citeasnoun[Theorem 6.8]{Massart:07}. The theoretical
innovation of Theorem 1 is that we have obtained an approximation that is
uniform in $(x,\tau )$ as well as in $P$. See Remark 1 below for a detailed
comparison.


\begin{rem}
The main difference between this paper and \citeasnoun{Guerre/Sabbah:12} is
that their result pays attention to uniformity in $h$ over some range, while
our result pays attention to uniformity in $P$. Also it is interesting to
note that the error rate here is a slight improvement over theirs. When $d=1$%
, the rate here is $O_{P}[\sqrt{\log n}/(n^{1/4}h^{1/4})]$ while the rate in
Theorem 2 of \citeasnoun{Guerre/Sabbah:12} is $O_{P}[\log
^{3/4}n/(n^{1/4}h^{1/4})]$. The difference is due to our use of an improved
inequality which leads to a tighter bound in the maximal inequality in
deriving the uniform error rate. For details, see the remark after the proof
of Theorem 1 in the appendix.
\end{rem}

The summands in the asymptotic linear representation form in Theorem 1
depend on the sample size and are not centered conditional on $X_i$. While this form can be useful in some contexts, the form is less illuminating. We provide an asymptotic linear representation that ensures this conditional centering given $X_i$.

\begin{cor}
\textit{Suppose that Assumptions QR1-QR2 hold. Then, for each }$k\in \mathbb{%
N}_{L},$%
\begin{eqnarray*}
&&\sup_{\tau \in \mathcal{T},x\in \mathcal{S}_{\tau }(\varepsilon
)}\left\Vert \sqrt{nh^{d}}H(\hat{\gamma}_{\tau ,k}(x)-\gamma _{\tau
,k}(x))-M_{n,\tau ,k}^{-1}(x)\tilde{\psi}_{n,x,\tau ,k}\right\Vert \\
&=&O_{P}\left( \frac{\log ^{1/2}n}{n^{1/4}h^{d/4}}\right) ,\ \mathcal{P}%
\text{\textit{-uniformly},}
\end{eqnarray*}%
\textit{where}%
\begin{equation*}
\tilde{\psi}_{n,x,\tau ,k}\equiv -\frac{1}{\sqrt{nh^{d}}}\sum_{i=1}^{n}1%
\left\{ L_{i}=k\right\} \sum_{l=1}^{L_{i}}\tilde{l}_{\tau }\left(
\varepsilon _{\tau ,lk,i}\right) c_{h,x,i}K_{h,x,i}\text{.}
\end{equation*}%
\bigskip
\end{cor}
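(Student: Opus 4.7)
The plan is to deduce the Corollary from Theorem 1 together with a negligibility bound for the difference
\[
R_{n,x,\tau,k}\equiv (\psi_{n,x,\tau,k}-\mathbf{E}\psi_{n,x,\tau,k})-\tilde{\psi}_{n,x,\tau,k}.
\]
The first observation is that $\mathbf{E}\tilde{\psi}_{n,x,\tau,k}=0$: conditioning on $(X_{i},L_{i})$ and using that the $\tau$-th conditional quantile of $\varepsilon_{\tau,lk,i}$ given $X_{i}$ and $L_{i}=k$ is zero by construction,
\[
\mathbf{E}[\tilde{l}_{\tau}(\varepsilon_{\tau,lk,i})\mid X_{i},L_{i}=k]=\tau-P(\varepsilon_{\tau,lk,i}\le 0\mid X_{i},L_{i}=k)=0.
\]
Hence $R_{n,x,\tau,k}$ equals $(\psi_{n,x,\tau,k}-\tilde{\psi}_{n,x,\tau,k})-\mathbf{E}(\psi_{n,x,\tau,k}-\tilde{\psi}_{n,x,\tau,k})$, the centered empirical process associated with $\psi-\tilde{\psi}$. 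Since $M_{n,\tau,k}^{-1}(x)$ is uniformly bounded in $(\tau,x,P)$ by QR1(ii) and QR2(i), the triangle inequality reduces the Corollary to establishing $\sup_{\tau,x}\|R_{n,x,\tau,k}\|=O_{P}(\log^{1/2}n/(n^{1/4}h^{d/4}))$ $\mathcal{P}$-uniformly.

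To control $R_{n,x,\tau,k}$, I would analyze the summands via a Taylor expansion. Setting $D_{x,\tau,k,i}\equiv q_{k}(\tau|X_{i})-\gamma_{\tau,k}^{\top}(x)c(X_{i}-x)$, so that $\Delta_{x,\tau,lk,i}=\varepsilon_{\tau,lk,i}+D_{x,\tau,k,i}$, Assumption QR1(iii) gives $|D_{x,\tau,k,i}|=O(\|X_{i}-x\|^{r+1})=O(h^{r+1})$ on the support of $K_{h,x,i}$, uniformly over $\tau\in\mathcal{T}$ and $P\in\mathcal{P}$. Therefore $\tilde{l}_{\tau}(\Delta_{x,\tau,lk,i})-\tilde{l}_{\tau}(\varepsilon_{\tau,lk,i})=1\{\varepsilon_{\tau,lk,i}\le 0\}-1\{\varepsilon_{\tau,lk,i}\le -D_{x,\tau,k,i}\}$ is, up to sign, the indicator of $\varepsilon_{\tau,lk,i}$ lying in an interval of length $O(h^{r+1})$ around $0$. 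The boundedness of $f_{\tau,k}(\cdot|X_{i})$ near zero (QR1(ii), uniformly in $P$) then gives the conditional probability of this event as $O(h^{r+1})$. Combined with $\mathbf{E}[K_{h,x,i}^{2}]=O(h^{d})$, this yields a per-summand variance of order $h^{d+r+1}$ for the (unnormalized) summands of $\psi_{n,x,\tau,k}-\tilde{\psi}_{n,x,\tau,k}$.

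Finally, I would bound $R_{n,x,\tau,k}$ uniformly by applying the same maximal-inequality machinery used to prove Theorem 1, in particular the improved empirical-process inequality noted in Remark 1. The class of summands, indexed by $(\tau,x,P)\in\mathcal{T}\times\mathcal{S}_{\tau}(\varepsilon)\times\mathcal{P}$, is VC-type uniformly in $P$: the kernel is compactly supported and Lipschitz (QR2(i)), half-space indicators in $\varepsilon_{\tau,lk,i}$ form a VC class, and the $\tau$-indexed functions $q_{k}(\tau|\cdot)$, $\gamma_{\tau,k}(\cdot)$, and $f_{\tau,k}(\cdot|\cdot)$ are Lipschitz in $\tau$ uniformly in $P$ by QR1(iv). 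With envelope of order $K_{h,x,i}$ and the variance bound just derived, the maximal inequality yields $\sup_{\tau,x}\|R_{n,x,\tau,k}\|=O_{P}(\log^{1/2}n/(n^{1/4}h^{d/4}))$ $\mathcal{P}$-uniformly, under QR2(ii). The Corollary then follows from Theorem 1 and the triangle inequality.

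The main obstacle is the maximal-inequality step: obtaining the precise logarithmic factor $\log^{1/2}n$ requires the improved bound emphasized in Remark 1, not a naive Bernstein or standard Massart inequality which would incur an extra logarithmic factor. However, since the class for $\psi-\tilde{\psi}$ shares the VC-type structure of the class for $\psi$ itself but with variance smaller by a factor $h^{r+1}$, the arguments already developed for Theorem 1 apply essentially verbatim once the identity $\mathbf{E}\tilde{\psi}_{n,x,\tau,k}=0$ is in hand, and no genuinely new empirical-process difficulty arises.
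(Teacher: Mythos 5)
Your proposal is correct and follows essentially the same route as the paper: the paper also reduces the Corollary to bounding the centered empirical process of $\psi_{n,x,\tau,k}-\tilde{\psi}_{n,x,\tau,k}$ (whose summands involve $\tilde{l}_{\tau}(\Delta_{x,\tau,lk,i})-\tilde{l}_{\tau}(\varepsilon_{\tau,lk,i})$, an indicator of $\varepsilon_{\tau,lk,i}$ falling in an interval of length $O(h^{r+1})$ given by the Taylor residual $\tilde{\delta}_{\tau,k}(X_i;x)$), and then invokes the maximal-inequality argument of Lemma QR1(i). Your explicit observation that $\mathbf{E}\tilde{\psi}_{n,x,\tau,k}=0$ is exactly the bookkeeping needed to pass from the $\psi_{n,x,\tau,k}-\mathbf{E}\psi_{n,x,\tau,k}$ form of Theorem 1, so no gap.
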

Note that the quantity $M_{n,\tau,k}(x)$ in the representation can be replaced by
\begin{equation*}
M_{\tau ,k}(x)\equiv k\int P\{L_{i}=k|X_{i}=x\}f_{\tau
	,k}(0|x)f(x)K(t)c(t)c^{\top }(t)dt,
\end{equation*}
if we modify the conditions on kernels and the smoothness conditions for the nonparametric function $P\{L_{i}=k|X_{i}=x\}f_{\tau,k}(0|x)f(x)$. As this modification can be done in a standard manner, we do not pursue details.

\subsection{Bootstrap Uniform Bahadur Representation for Local Polynomial
Quantile Regression Estimator}

Let us consider the bootstrap version of the Bahadur representation in Theorem 1. Suppose that $\{(Y_{i}^{\ast \top
},X_{i}^{\ast \top })\}_{i=1}^{n}$ is a bootstrap sample drawn with
replacement from the empirical distribution of $\{(Y_{i}^{\top },X_{i}^{\top
})_{i=1}^{n}\}$. Throughout the paper, the bootstrap distribution $P^{\ast
} $ is viewed as the distribution of $(Y_{i}^{\ast },X_{i}^{\ast })_{i=1}^{n},$ conditional on $(Y_{i},X_{i})_{i=1}^{n}$, and let $\mathbf{E}^*$ be expectation with respect to $P^*$.

We define the notion of uniformity in the convergence of distributions under 
$P^{\ast }$. For any sequence of real numbers $b_{n}>0$, and any sequence of
random vectors $Z_{n}^{\ast }$, we say that $Z_{n}^{\ast }/b_{n}\rightarrow
_{P^{\ast }}0,$ $\mathcal{P}$-uniformly, or $Z_{n}^{\ast }=o_{P^{\ast
}}(b_{n}),$ $\mathcal{P}$-uniformly, if for any $a>0$,%
\begin{equation*}
\underset{n\rightarrow \infty }{\limsup}\sup_{P\in \mathcal{P}%
}P\left\{ P^{\ast }\left\{ ||Z_{n}^{\ast }||>ab_{n}\right\} >a\right\} =0%
\text{.}
\end{equation*}%
Similarly, we say that $Z_{n}^{\ast }=O_{P^{\ast }}(b_{n})$, $\mathcal{P}$%
-uniformly, if for any $a>0$, there exists $M>0$ such that 
\begin{equation*}
\underset{n\rightarrow \infty }{\limsup}\sup_{P\in \mathcal{P}%
}P\left\{ P^{\ast }\left\{ ||Z_{n}^{\ast }||>Mb_{n}\right\} >a\right\} <a%
\text{.}
\end{equation*}

For $z=(x,\tau )\in \mathcal{Z}$, define $\Delta _{x,\tau ,lk,i}^{\ast
}\equiv Y_{l,i}^{\ast }-\gamma _{\tau ,k}^{\top }(x)c(X_{i}^{\ast }-x)$, and
let $c_{h,x,i}^{\ast }$ and $K_{h,x,i}^{\ast }$ be $c_{h,x,i}$ and $%
K_{h,x,i} $ except that $X_{i}$ is replaced by $X_{i}^{\ast }$. Then the
following theorem gives the bootstrap version of Theorem 1.

\begin{thm}
\textit{Suppose that Assumptions QR1-QR2 hold. Then for each} $k\in \mathbb{N%
}_{J},$%
\begin{eqnarray*}
&&\sup_{(x,\tau )\in \mathcal{X}_{1}\times \mathcal{T}}\left\Vert \sqrt{%
nh^{d}}H(\hat{\gamma}_{\tau ,k}^{\ast }(x)-\hat{\gamma}_{\tau
,k}(x))-M_{n,\tau ,k}^{-1}(x)\left(\psi _{n,x,\tau ,k}^* - \mathbf{E}^*\psi _{n,x,\tau ,k}^* \right) \right\Vert \\
&=&O_{P^{\ast }}\left( \frac{\log ^{1/2}n}{n^{1/4}h^{d/4}}\right) ,\text{ }%
\mathcal{P}\text{\textit{-uniformly,}}
\end{eqnarray*}%
\textit{where }$\psi _{n,x,\tau ,k}^{\ast }\equiv -\frac{1}{\sqrt{nh^{d}}}%
\sum_{i=1}^{n}1\{L_{i}=k\}\sum_{l=1}^{k}\tilde{l}_{\tau }\left( \Delta
_{x,\tau ,lk,i}^{\ast }\right) c_{h,x,i}^{\ast } K_{h,x,i}^{\ast
} $.
\end{thm}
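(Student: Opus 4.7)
The plan is to re-run, conditionally on the original sample, the Pollard--Massart argument used to prove Theorem~1, replacing $P$ with the bootstrap law $P^{\ast }$ throughout. Two elementary identities make the translation clean. First, because the bootstrap sample is i.i.d.\ from the joint empirical distribution of the data, the conditional mean of the bootstrap score at the population parameter $\gamma _{\tau ,k}(x)$ is exactly the empirical score, $\mathbf{E}^{\ast }\psi _{n,x,\tau ,k}^{\ast }=\psi _{n,x,\tau ,k}$, so the centered quantity in the statement is nothing but $\psi _{n,x,\tau ,k}^{\ast }-\psi _{n,x,\tau ,k}$. Second, $\hat{\gamma}_{\tau ,k}(x)$ is the minimizer of $\mathbf{E}^{\ast }S_{n,x,\tau ,k}^{\ast }(\gamma )=n^{-1}S_{n,x,\tau ,k}(\gamma )$, so it is the correct expansion point for the bootstrap criterion, and $\sqrt{nh^{d}}H(\hat{\gamma}_{\tau ,k}^{\ast }(x)-\hat{\gamma}_{\tau ,k}(x))$ is the minimizer of the convex reparametrized objective
\begin{equation*}
T_{n,x,\tau ,k}^{\ast }(t)\equiv S_{n,x,\tau ,k}^{\ast }\bigl(\hat{\gamma}_{\tau ,k}(x)+(nh^{d})^{-1/2}H^{-1}t\bigr)-S_{n,x,\tau ,k}^{\ast }\bigl(\hat{\gamma}_{\tau ,k}(x)\bigr).
\end{equation*}

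Applying Knight's identity to the check function decomposes $T_{n,x,\tau ,k}^{\ast }(t)$ into a linear-in-$t$ piece whose slope is a bootstrap score centered at $\hat{\gamma}_{\tau ,k}$, plus a quadratic-like remainder. The conditional mean of the quadratic piece under $P^{\ast }$ is a kernel-weighted empirical average that, by a uniform law of large numbers that is uniform in $(x,\tau )$ and $\mathcal{P}$-uniform, equals $\tfrac{1}{2}t^{\top }M_{n,\tau ,k}(x)t+o_{P}(1)$; uniform nonsingularity of $M_{n,\tau ,k}(x)$, guaranteed by the BZD conditions in Assumption~QR1(ii), permits this matrix to be inverted in the representation. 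Pollard's convexity lemma then expresses the minimizer of $T_{n,x,\tau ,k}^{\ast }$ as $M_{n,\tau ,k}^{-1}(x)$ applied to this recentered score, plus a remainder, provided we can bound the supremum of the recentered bootstrap empirical process over the same VC-type function class that appears in the proof of Theorem~1. This supremum is handled by Massart's maximal inequality applied conditionally on the original sample, using the envelope and entropy bounds already established there; because those bounds are $\mathcal{P}$-uniform by Assumption~QR1, the conditional probability that the remainder exceeds $c\log ^{1/2}n/(n^{1/4}h^{d/4})$ tends to zero $\mathcal{P}$-uniformly, which is precisely the content of the $O_{P^{\ast }}$, $\mathcal{P}$-uniform conclusion.

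It remains to replace the bootstrap score centered at $\hat{\gamma}_{\tau ,k}$ that emerges naturally from the convexity argument by the score centered at $\gamma _{\tau ,k}$ that defines $\psi _{n,x,\tau ,k}^{\ast }$ in the statement. The difference of the two scores is governed by $\hat{\gamma}_{\tau ,k}(x)-\gamma _{\tau ,k}(x)$, which by Theorem~1 is $O_{P}((nh^{d})^{-1/2})$ $\mathcal{P}$-uniformly; combined with the Lipschitz continuity of $f_{\tau ,k}(\cdot |x)$ and $\gamma _{\tau ,k}$ in Assumption~QR1(iv) and the smoothness of the conditional density, a standard one-step expansion of the indicator-difference shows that this swap contributes a term of smaller order than the target Bahadur rate. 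The main technical obstacle is maintaining joint uniformity in $(x,\tau )$, in $P\in \mathcal{P}$, and in the bootstrap randomness $P^{\ast }$ simultaneously: because Massart's inequality is applied conditionally on the data, every constant appearing in it (envelopes, VC dimension, uniform entropy) must be dominated by a nonrandom quantity uniform in $P\in \mathcal{P}$. This is exactly the control already extracted from the classes $\mathcal{G}_{q}(k),\mathcal{G}_{f}(k),\mathcal{G}_{L}(k),\mathcal{G}_{f,2}(k),\mathcal{G}_{\gamma }(k)$ in the proof of Theorem~1, and it carries over to the bootstrap setting without additional structural assumptions.
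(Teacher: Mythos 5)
Your overall skeleton (Knight's identity, Pollard's convexity argument, a maximal inequality over the same function classes, then a recentering step) matches the paper's, and several of your preliminary observations are correct and useful: in particular $\mathbf{E}^{\ast }\psi _{n,x,\tau ,k}^{\ast }=\psi _{n,x,\tau ,k}$ and the fact that $\hat{\gamma}_{\tau ,k}(x)$ minimizes $\mathbf{E}^{\ast }S_{n,x,\tau ,k}^{\ast }(\cdot )$. The genuine gap is in the step you rely on most heavily: applying Massart's Theorem 6.8 \emph{conditionally on the data}. That inequality requires bracketing entropy, envelope, and second-moment bounds with respect to the sampling measure, which conditionally is the random empirical measure $P_{n}$, not $P$. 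The sup-norm entropy bounds from the proof of Lemma QR1(i) do transfer to $P_{n}$, but the crucial variance bound there, $\sup_{P}\mathbf{E}|g(S_{i})|^{2}\leq Cs_{n}^{2}$ with $s_{n}=\delta _{2n}^{3/2}/(n^{3/4}h^{d/4})$, is obtained from $P\{\max_{s}|X_{is}-x|\leq h/2\}\leq Ch^{d}$ together with the conditional-density bound; its conditional analogue involves $P_{n}\{\max_{s}|X_{is}-x|\leq h/2\}$ and the empirical conditional distribution of $\Delta _{x,\tau ,lk,i}$ in a shrinking window, both of which are random and require a separate uniform-in-$(x,\tau ,P)$ control with explicit probability bounds before the conditional maximal inequality delivers a nonrandom rate. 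You assert that this control \textquotedblleft carries over without additional structural assumptions,\textquotedblright\ but that is exactly the nontrivial part. The paper avoids it entirely by invoking Le Cam's Poissonization lemma (Proposition 2.5 of Gin\'{e} (1997)): the expected (over both the data and the bootstrap weights) supremum of the recentered bootstrap process is bounded by the expected supremum of $\sum_{i}(N_{i}-1)g(S_{i})$ with i.i.d.\ Poisson weights independent of the data, which is again a sum of independent terms under the true $P$, so the unconditional entropy and moment bounds from Lemma QR1 apply verbatim. This is the content of Lemma QR2, and it is the device your argument is missing or must replace with a substantive additional uniform-convergence step.

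Two smaller points. First, your quadratic approximation step states only that the conditional mean of the remainder equals $\tfrac{1}{2}t^{\top }M_{n,\tau ,k}(x)t+o_{P}(1)$; for the convexity argument to yield the claimed Bahadur rate you need this error to be $o$ of $\delta _{2n}^{2}=M^{2}\log n/(n^{1/2}h^{d/2})$ uniformly over $\Vert t\Vert \leq \delta _{2n}$ (this is what Lemma QR2(ii) provides, by writing $\mathbf{E}^{\ast }[\zeta ^{\Delta \ast }]=(\mathbf{E}^{\ast }[\zeta ^{\Delta \ast }]-\mathbf{E}[\zeta ^{\Delta }])+\mathbf{E}[\zeta ^{\Delta }]$ and controlling the first difference by the part-(i) bound). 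Second, your choice to expand around $\hat{\gamma}_{\tau ,k}(x)$ forces the extra \textquotedblleft swap the centering of the score\textquotedblright\ step at the end; the paper's route keeps $\Delta _{x,\tau ,lk,i}^{\ast }$ centered at the population $\gamma _{\tau ,k}(x)$ throughout and obtains the stated representation for $\hat{\gamma}^{\ast }-\hat{\gamma}$ by differencing with Theorem 1, which is cleaner and avoids that one-step expansion.
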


Theorem 2 is obtained by using Le Cam's Poissonization Lemma (see %
\citeasnoun[Proposition 2.5]{Gine:97}) and following the proof of Theorem 1.
The bootstrap version of Corollary 1 follows immediately from Theorem 2.

\section{Testing Monotonicity of Quantile Regression}

\label{test-mono-qr}

This section considers testing monotonicity of quantile regression. We first
state the testing problem formally, give the form of test statistic, verify
regularity conditions, and present results of simple Monte Carlo experiments.

\subsection{Testing Problem}

\label{test:prob:qr:mono}

Let $q(\tau |x)$ denote the $\tau $-th quantile of $Y$ conditional on $X=x$,
where $\tau \in (0,1)$ and $X$ is a scalar random variable. In this subsection, we consider testing monotonicity of quantile regression. Define $g_{\tau }(x)\equiv \partial q(\tau|x)/\partial x$. The null hypothesis and the alternative hypothesis are as follows: 
\begin{align}
H_{0}& :g_{\tau }(x)\leq 0\ \text{for all }(\tau ,x)\in \mathcal{T\times X}%
\text{ against}  \label{null1} \\
H_{1}& :g_{\tau }(x)>0\ \text{for some }(\tau ,x)\in \mathcal{T}\times 
\mathcal{X},  \notag
\end{align}%
where $\mathcal{X}$ is contained in the support of $X$ and $\mathcal{T}%
\subset (0,1)$. The null hypothesis states that the quantile functions are
increasing on $\mathcal{X}$ for all $\tau \in \mathcal{T}$, and the
alternative hypothesis is the negation of the hypothesis. If $\mathcal{T}$
is a singleton, then testing \eqref{null1} amounts to testing
monotonicity of quantile regression at a fixed quantile.

Suppose that $q(\tau |x)$ is continuously differentiable on $\mathcal{X}$
for each $\tau \in \mathcal{T}$. Then one natural approach is to test the
sign restriction of the derivative of $q(\tau |x)$. In other words, we 
develop a test of inequality restrictions using the local polynomial estimator of $\partial q(\tau
|x)/\partial x$.

One may consider various other forms of monotonicity tests for quantile
regression. For example, one might be interested in monotonicity of an
interquartile regression function. More specifically, let $\tau _{1}<\tau
_{2}$ be chosen from $(0,1)$ and write $\Delta g_{\tau _{1},\tau
_{2}}(x)\equiv g_{\tau _{2}}(x)-g_{\tau _{1}}(x)$. Then the null hypothesis
and the alternative hypothesis of monotonicity of the interquartile
regression function are as follows:%
\begin{eqnarray}
H_{0,\Delta } &:&\Delta g_{\tau _{1},\tau _{2}}(x)\leq 0\ \text{for all }%
x\in \mathcal{X}\text{ against}  \label{null3} \\
H_{1,\Delta } &:&\Delta g_{\tau _{1},\tau _{2}}(x)>0\ \text{for some }x\in 
\mathcal{X}\text{.}  \notag
\end{eqnarray}%
The null hypothesis states that the interquartile regression function $%
q(\tau _{2}|x)-q(\tau _{1}|x)$ is increasing on $\mathcal{X}$. This type of
monotonicity can be used to investigate whether the income inequality (in
terms of interquartile comparison) becomes severe as certain demographic
variable $X$ such as age increases.

\subsection{Test Statistic}

Suppose that we are given a random sample $\{(Y_{i},X_{i})\}_{i=1}^{n}$ of $%
(Y,X)$. First, we estimate $g_{\tau }(x)$ by local polynomial estimation to obtain, say, $%
\hat{g}_{\tau }(x) \equiv \mathbf{e}%
_{2}^{\top }\mathbf{\hat{\gamma}}_{\tau }(x)$, where 
$\mathbf{e}_2$ is a column vector whose second entry is one and the rest zero, and 
\begin{equation}
\mathbf{\hat{\gamma}}_{\tau }(x)\equiv \text{argmin}_{\mathbf{\gamma
\in R}^{r+1}}\sum_{i=1}^{n}l_{\tau }\left( Y_{i}-\mathbf{\gamma }%
^{\top }c(X_{i}-x)\right) K_{h}(X_{i}-x).
\label{gamma_hat_original}
\end{equation}%

This paper applies the general approach of LSW, and proposes a new nonparametric test of mononoticity hypotheses involving quantile regression functions. First, testing the monotonicity of $q(\tau |\cdot )$ is tantamount to testing nonnegativity of $g_{\tau }$ on a domain of interest. Define $\Lambda _{p}(a)=\left( \max \left\{ a,0\right\} \right) ^{p}$ for
any real numbers $a>0$ and $p\geq 1.$
We consider two test statistics corresponding to the two testing problems 
discussed in Section \ref{test:prob:qr:mono}: for $1 \leq p < \infty,$%
\begin{eqnarray}
T_{n,2} &\equiv &\int_{\mathcal{X}\times \mathcal{T}}\Lambda _{p}(\hat{g}%
_{\tau }(x))w_{\tau }(x)d(x,\tau )\text{, and}  \label{test stats} \\
T_{n}^{\Delta } &\equiv &\int_{\mathcal{X}}\Lambda _{p}(\Delta \hat{g}_{\tau
_{1},\tau _{2}}(x))w(x)dx,\text{ }\tau _{1},\tau _{2}\in (0,1),  \notag
\end{eqnarray}%
where $\Delta \hat{g}_{\tau _{1},\tau _{2}}(x)\equiv \hat{g}_{\tau
_{2}}(x_{1})-\hat{g}_{\tau _{1}}(x_{1})$, $w_{\tau }(\cdot )$ and $w(\cdot )$
are nonnegative weight functions. The test statistic $T_{n,2}$ is used to
test $H_{0}$ against $H_{1}$ and the test statistic $T_{n,\Delta }$ to test $%
H_{0,\Delta }$ against $H_{1,\Delta }$.

Let $\{(Y_{i}^{\ast },X_{i}^{\ast })\}$ be a bootstrap sample obtained from
resampling from $\{(Y_{i},X_{i})\}_{i=1}^{n}$ with replacement. Then we define%
\begin{equation}
\mathbf{\hat{\gamma}}_{\tau }^{\ast }(x)\equiv \text{argmin}_{\mathbf{\gamma
\in R}^{r+1}}\sum_{i=1}^{n}l_{\tau }\left( Y_{i}^{\ast }-\mathbf{\gamma }%
^{\top }c(X_{i}^{\ast }-x)\right) K_{h}(X_{i}^{\ast }-x)
\label{gamma_hat}
\end{equation}%
and take  $\hat{g}_{\tau }^{\ast }(x)\equiv \mathbf{e}%
_{2}^{\top }\mathbf{\hat{\gamma}}_{\tau }^{\ast }(x)$, similarly as before.
We construct the \textquotedblleft recentered\textquotedblright\ bootstrap
test statistics:%
\begin{eqnarray}
T_{n,2}^{\ast } &\equiv &\int_{\mathcal{X}\times \mathcal{T}}\Lambda _{p}\left( \hat{g%
}_{\tau }^{\ast }(x)-\hat{g}_{\tau }(x)\right) w_{\tau }(x)d(x,\tau ),\text{
and}  \label{bt test statistics} \\
T_{n,\Delta }^{\ast } &\equiv &\int_{\mathcal{X}}\Lambda
_{p}\left( \Delta \hat{g}_{\tau _{1},\tau _{2}}^{\ast }(x)-\Delta \hat{g}%
_{\tau _{1},\tau _{2}}(x)\right) w(x)dx,  \notag
\end{eqnarray}%
where $\Delta \hat{g}_{\tau _{1},\tau _{2}}^{\ast }(x)\equiv \hat{g}_{\tau
_{2}}^{\ast }(x)-\hat{g}_{\tau _{1}}^{\ast }(x)$. We can now take the
bootstrap critical values $c_{2,\alpha }^{\ast }$ and $c_{\Delta ,\alpha
}^{\ast }$ to be the $(1-\alpha )$ quantiles from the bootstrap
distributions of $T_{n,2}^{\ast }$ and $T_{n,\Delta }^{\ast }$. Then we
define 
\begin{equation*}
c_{2,\alpha ,\eta }^{\ast }=\max \{c_{2,\alpha }^{\ast },h^{1/2}\eta +\hat{a}%
_{2}^{\ast }\}\text{ and\ }c_{\Delta ,\alpha ,\eta }^{\ast }=\max
\{c_{\Delta ,\alpha }^{\ast },h^{1/2}\eta +\hat{a}_{\Delta }^{\ast }\},
\end{equation*}%
where$\ \hat{a}_{2}^{\ast }\equiv \mathbf{E}^{\ast }T_{n,2}^{\ast }$ and $%
\hat{a}_{\Delta }^{\ast }\equiv \mathbf{E}^{\ast }T_{n,\Delta }^{\ast }$.
The $(1-\alpha )$-level bootstrap tests for the two hypotheses are defined as%
\begin{equation}
\begin{tabular}{ll}
$\text{Reject }H_{0}$ & $\text{if and only if }T_{n,2}>c_{2,\alpha ,\eta
}^{\ast }$. \\ 
$\text{Reject }H_{0,\Delta }$ & $\text{if and only if }T_{n,\Delta
}>c_{\Delta ,\alpha ,\eta }^{\ast }.$%
\end{tabular}
\label{tests}
\end{equation}

\subsection{Primitive Conditions}

We present primitive conditions for the asymptotic validity of the proposed monotonicity tests. Let $\mathcal{P}$
denote the collection of the potential joint distributions of $(Y,X)^{\top }$
and define $\mathcal{V}=\mathcal{T}\times \mathcal{P}$ as before. We also
define $\mathcal{\tilde{V}}=\mathcal{T}^{2}\times \mathcal{P}$ and define $%
\mathcal{G}_{f}$ as in (\ref{fns}). Similarly as in (\ref{fns}), we
introduce the following definitions: 
\begin{eqnarray*}
\mathcal{G}_{g} &=&\left\{ g_{\tau }(\cdot ):(\tau ,P)\in \mathcal{V}%
\right\} , \\
\mathcal{G}_{\Delta g} &=&\{\Delta g_{\tau _{1},\tau _{2}}(\cdot ):(\tau
_{1},\tau _{2},P)\in \mathcal{\tilde{V}\}},\text{ and} \\
\mathcal{G}_{Q,f} &=&\left\{ f_{\tau }(\cdot |\cdot ):(\tau ,P)\in \mathcal{V%
}\right\} ,
\end{eqnarray*}%
where $f_{\tau }(0|x)$ being the conditional density of $Y_{i}-q(\tau
|X_{i}) $ given $X_{i}=x.$ Also, define%
\begin{equation*}
\mathcal{G}_{f,2}=\left\{ f_{\cdot }(\cdot |\cdot ):P\in \mathcal{P}\right\} 
\text{ and\ }\mathcal{G}_{\gamma }=\left\{ \gamma _{\cdot }(\cdot ):P\in 
\mathcal{P}\right\} .
\end{equation*}%
We make the following assumptions.
\begin{AssumptionMON}
(i) $\mathcal{G}_{f}$ satisfies BD($\mathcal{S},\varepsilon ,1)$.\newline
\noindent (ii) $\mathcal{G}_{Q,f}\ $satisfies both BD($\mathcal{S},\varepsilon ,1$%
) and BZD($\mathcal{S},\varepsilon ,1).$\newline
\noindent (iii) $\mathcal{G}_{g}\ $satisfies BD($\mathcal{S},\varepsilon
,r+1 $) for some $r>3/2.$\newline
\noindent (iv) $\mathcal{G}_{f,2}$ and $\mathcal{G}_{\gamma }$ satisfy LC.
\end{AssumptionMON}

\begin{AssumptionMON}
(i) $K$ is nonnegative and satisfies Assumption QR2(i).\newline
\noindent (ii) $n^{-1/2}h^{-\{(3+\nu )/2\}+1}+\sqrt{n}h^{r+2}/\sqrt{\log n}%
\rightarrow 0,$ as $n\rightarrow \infty ,\ $for some small $\nu >0$, with $r$
in Assumption MON1(iii).
\end{AssumptionMON}

Assumption MON1 introduces a set of regularity conditions for various
function spaces. Conditions (i)-(iii) require smoothness conditions. In
particular, Condition (iii) is used to control the bias of the nonparametric
quantile regression derivative estimator. Condition (iv) is analogous to
Assumption QR1(iv) and used to control the size of the function space
properly. Assumption MON2 introduces conditions for the kernel and bandwidth. The condition in Assumption MON2(ii) requires a bandwidth condition that is stronger than that in Assumption QR2(ii).

Assumptions IQM1-IQM2 below are used for testing $
H_{0,\Delta }$.

\begin{AssumptionIQM}
(i) Assumptions MON1(i),(ii) and (iv) hold.\newline
\noindent (ii) $\mathcal{G}_{\Delta g}\ $satisfies BD($\mathcal{S}%
,\varepsilon ,r+1)$ for some $r>3/2.$
\end{AssumptionIQM}

\begin{AssumptionIQM}
The kernel function $K$ and the bandwidth $h$ satisfy Assumption MON2.
\end{AssumptionIQM}

The following result establishes the uniform validity of the bootstrap test.
Let $\mathcal{P}_{0} \subset \mathcal{P}$ denote the set of potential distributions of the observed random vector under the null
hypothesis.

\begin{thm}\label{Thm3}
(i) Suppose that Assumptions MON1-MON2 hold. Then,
\begin{equation*}
\limsup_{n\rightarrow \infty }\sup_{P\in \mathcal{P}%
_{0}}P\{T_{n,2}>c_{2,\alpha ,\eta }^{\ast }\}\leq \alpha .
\end{equation*}%
\newline
(ii) Suppose that Assumptions IQM1-IQM2 hold. Then,
\begin{equation*}
\limsup_{n\rightarrow \infty }\sup_{P\in \mathcal{P}_{0}}P\left\{
T_{n,\Delta }>c_{\Delta ,\alpha ,\eta }^{\ast }\right\} \leq \alpha .
\end{equation*}
\end{thm}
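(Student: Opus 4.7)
My plan is to reduce Theorem~\ref{Thm3} to the general bootstrap validity framework of \citeasnoun{LSW} (LSW), feeding their main theorem with the uniform Bahadur representations of Corollary~1 and Theorem~2. I describe part~(i); part~(ii) is identical after replacing $g_\tau$ by $\Delta g_{\tau_1,\tau_2}$ and applying Corollary~1 at both $\tau_1$ and $\tau_2$, as permitted by Assumption IQM1(ii).

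First, I would specialize Corollary~1 and Theorem~2 to $\bar{L}=1$, $k=1$, and then pick out the derivative component via $\mathbf{e}_2^\top$. This gives, $\mathcal{P}_0$-uniformly,
\[
\sqrt{nh^{d+2}}\bigl(\hat{g}_\tau(x)-g_\tau(x)\bigr) = A_n(\tau,x) + R_n(\tau,x),
\]
with $R_n$ of smaller order than the leading kernel-type sum $A_n$ involving $\tilde{l}_\tau$; and analogously $\sqrt{nh^{d+2}}(\hat{g}^\ast_\tau(x)-\hat{g}_\tau(x)) = A^\ast_n(\tau,x) + R^\ast_n(\tau,x)$ with $R^\ast_n=o_{P^\ast}(1)$, $\mathcal{P}_0$-uniformly. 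The strengthened bandwidth condition MON2(ii) is designed precisely so that both this Bahadur remainder and the local polynomial bias are negligible after integration against the $L_p$ functional.

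Second, I would cast $T_{n,2}$ in LSW's inequality-testing form. Under $H_0$, $g_\tau(x)\leq 0$, so pointwise
\[
\Lambda_p(\hat{g}_\tau(x)) = \Lambda_p\bigl((\hat{g}_\tau(x)-g_\tau(x)) + g_\tau(x)\bigr) \leq \Lambda_p\bigl(\hat{g}_\tau(x)-g_\tau(x)\bigr).
\]
Decomposing the integration domain $\mathcal{X}\times\mathcal{T}$ into the contact set $B_P=\{(x,\tau):g_\tau(x)=0\}$ and its complement, the off-contact region contributes negligibly because $g_\tau$ is bounded away from zero there. On $B_P$, $\Lambda_p(\hat{g}_\tau(x))$ is distributionally matched by $\Lambda_p(\hat{g}^\ast_\tau(x)-\hat{g}_\tau(x))$ via the two Bahadur representations, and the Poissonization-based limit theory for $L_p$ functionals of kernel estimators (\citeasnoun{Gine:97}) delivers the weak convergence underpinning the bootstrap approximation. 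The floor $c^\ast_{2,\alpha,\eta}=\max\{c^\ast_{2,\alpha},\,h^{1/2}\eta+\hat{a}^\ast_2\}$ is the LSW safeguard: it absorbs both the Bahadur approximation error and the pathological case in which $B_P$ is empty or shrinks along sequences $P_n\in\mathcal{P}_0$ (as in the $x^3/n$ example of the introduction), delivering $\limsup_n \sup_{P\in\mathcal{P}_0} P\{T_{n,2}>c^\ast_{2,\alpha,\eta}\}\leq\alpha$.

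The main obstacle I anticipate is not the Bahadur expansion itself---which is already done---but rather verifying LSW's Poissonization limit and tightness conditions \emph{uniformly} in $P\in\mathcal{P}_0$, along sequences whose population contact sets may degenerate with $n$. The uniform-in-$P$ strength of Corollary~1 and Theorem~2 feeds directly into this step, but one must additionally show that the centered kernel-score processes $\{A^\ast_n(\tau,x)-A_n(\tau,x):(x,\tau)\in\mathcal{X}\times\mathcal{T}\}$ have covering numbers controlled uniformly in $P$; this is precisely what Assumptions MON1(iv) and IQM1(i), together with the boundedness in MON1(i)--(iii), are engineered to guarantee via the Lipschitz-in-$\tau$ structure of the conditional densities $f_\tau$ and of the Taylor coefficients $\gamma_\tau$.
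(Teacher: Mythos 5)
Your proposal is correct and follows essentially the same route as the paper: the authors also prove Theorem 3 by verifying the assumptions of LSW's general theorem (their Lemma MIQ checks LSW's Assumptions A1--A3, A5--A6, B1--B4, with $r_n=\sqrt{nh^3}$ and the influence function built from $\tilde l_\tau$, $M_{n,\tau}^{-1}$, and the kernel), feeding in Theorem 1/Corollary 1 for the sample-side Bahadur representation, Lemma QR2/Theorem 2 for the bootstrap side, MON1(iv) for the entropy control, and MON2(ii) for negligibility of the bias and remainder. The contact-set decomposition and Poissonization limit theory you sketch are internal to LSW's theorem rather than re-derived here, but that is a difference of exposition, not of method.
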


Using the general framework of LSW, it is possible to establish the consistency and local power of the test.
Furthermore, it is also feasible to obtain a more powerful (but still asymptotically uniformly valid) test by estimating a contact set
at the expense of requiring an additional tuning parameter (see LSW for details). 

\subsection{Monte Carlo Experiments}

In this subsection, we present results of some Monte Carlo experiments that
illustrate the finite-sample performance of one of the proposed tests.
Specifically, we consider the following null and alternative hypotheses: 
\begin{equation*}
H_{0}:\forall x\in \mathcal{X},\ g(x)\geq 0\;vs.\;H_{1}:\exists x\in 
\mathcal{X},\ g(x)<0,
\end{equation*}%
where $g(x)\equiv \partial \mathrm{Median}\left[ Y|X=x\right] /\partial x$.
In the experiments, $X$ is generated independently from $\text{Unif}[0,1]$
and $U$ follows the distribution of $X^{4}\times \mathbf{N}(0,0.1)$.

To check the size of the test, we generated $Y = U$, which we call the null
model. Note that the null model corresponds to the least favorable case. To
see the power of the test, we considered the following alternative models: $%
Y=m_{j}(X)+U$ $(j=1,2,3,4,5)$, where 
\begin{eqnarray*}
m_{1}(x) &=&x(1-x), \\
m_{2}(x) &=&-0.1x, \\
m_{3}(x) &=&-0.1\exp (-50(x-0.5)^{2}), \\
m_{4}(x) &=& x + 0.6\exp(-10x^2), \\
m_{5}(x) &=& [10(x-0.5)^3 - 2\exp(-10(x-0.5)^2)]1( x < 0.5) \\
& & + [0.1(x-0.5) - 2\exp(-10(x-0.5)^2)]1(x \geq 0.5).
\end{eqnarray*}
In all experiments, $\mathcal{X}=[0.05,0.95]$. Figures \ref{figure1} and \ref%
{figure2} show the true functions and corresponding simulated data.

The experiments use sample size of $n=200$ and the nominal level of $\alpha
=0.10,0.05,$ and $0.01.$ We performed $1,000$ Monte Carlo replications for
the null model and $200$ replications for the alternative models. For each
replication, we generated $200$ bootstrap resamples. We used the local
linear quantile regression estimator with the uniform kernel on $[-1/2,1/2]\ 
$for $K(\cdot )$. Furthermore, for the test statistic, we used $p=2$
(one-sided $L_2$ norm) and uniform weight function $w(x)=1$ and $h \in
\{0.9,1,1.1\}$.

For the null model, the bootstrap approximation is quite good, especially
with $h = 1$. The test shows good power for alternative models 1-3 and 5.
For the alternative model 4, the power is sensitive with respect to the
choice of the bandwidth. Overall, the finite sample behavior of the proposed
test is satisfactory.

\section{Conclusions}

\label{sec:conclusion}

In this paper, we have established a uniform error rate of a Bahadur
representation for the local polynomial quantile regression estimator. The
error rate is uniform over a range of quantiles, a range of evaluation
points in the regressors, and over a wide class of probabilities for
observed random variables. We have illustrated the use of our Bahadur
representation in the context of testing monotonicity. In
addition, we have established the same error rate for the bootstrap local
polynomial quantile regression estimator, which can be useful for bootstrap inference.

\section{Proofs}

\label{sec:proofs}

We also define for $a,b\in \mathbf{R}^{|A_{r}|},$%
\begin{equation*}
\zeta _{n,x,\tau ,k}(a,b)\equiv \sum_{i=1}^{n}1\left\{ L_{i}=k\right\}
\sum_{l=1}^{L_{i}}\left\{ 
\begin{array}{c}
l_{\tau }\left( \Delta _{x,\tau ,lk,i}-(a+b)^{\top }c_{h,x,i}/\sqrt{nh^{d}}%
\right) \\ 
-l_{\tau }\left( \Delta _{x,\tau ,lk,i}-a^{\top }c_{h,x,i}/\sqrt{nh^{d}}%
\right)%
\end{array}%
\right\} K_{h,x,i},
\end{equation*}%
and 
\begin{equation*}
\zeta _{n,x,\tau ,k}^{\Delta }(a,b)\equiv \zeta _{n,x,\tau ,k}(a,b)-b^{\top
}\psi _{n,x,\tau ,k}.
\end{equation*}

\begin{LemmaQR}
\textit{Suppose that Assumptions QR1-QR2 hold. Let }$\{\delta
_{1n}\}_{n=1}^{\infty }$\textit{\ and }$\{\delta _{2n}\}_{n=1}^{\infty }$%
\textit{\ be positive sequences such that }$\delta _{1n}=M\sqrt{\log n}$ 
\textit{for some }$M>0$ \textit{and }$\delta _{2n}\leq \delta _{1n}$\textit{%
\ from some large }$n$ \textit{on}. \textit{Then for each }$k\in \mathbb{N}%
_{L},$ \textit{the following holds uniformly over} $P\in \mathcal{P}$:

\noindent (i)%
\begin{eqnarray*}
&&\mathbf{E}\left[ \sup_{a,b:||a||\leq \delta _{1n},||b||\leq \delta
_{2n}}\sup_{\tau \in \mathcal{T},x\in \mathcal{S}_{\tau }(\varepsilon
)}|\zeta _{n,x,\tau ,k}^{\Delta }(a,b)-\mathbf{E}[\zeta _{n,x,\tau
,k}^{\Delta }(a,b)]|\right] \\
&=&O\left( \frac{\delta _{2n}\sqrt{\log n}}{n^{1/4}h^{d/4}}\right) .
\end{eqnarray*}

\noindent (ii)%
\begin{equation*}
\mathbf{E}\left[ \sup_{\tau \in \mathcal{T},x\in \mathcal{S}_{\tau
}(\varepsilon )}\left\Vert \psi _{n,x,\tau ,k}\right\Vert \right] =O\left( 
\sqrt{\log n}+h^{r+1}\right)=O\left( 
\sqrt{\log n}\right) .
\end{equation*}

\noindent (iii)%
\begin{eqnarray*}
&&\sup_{a,b:||a||\leq \delta _{1n},||b||\leq \delta _{2n}}\sup_{\tau \in 
\mathcal{T},x\in \mathcal{S}_{\tau }(\varepsilon )}\left\vert \mathbf{E}%
[\zeta _{n,x,\tau ,k}^{\Delta }(a,b)]-\frac{b^{\top }M_{n,\tau ,k}(x)(b+2a)}{%
2}\right\vert \\
&=&O\left( \frac{\delta _{2n}\delta _{1n}^{2}}{n^{1/2}h^{d/2}}\right) .
\end{eqnarray*}%
\textit{\ }
\end{LemmaQR}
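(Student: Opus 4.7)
The plan is to apply Knight's identity
\[
l_\tau(u-v) - l_\tau(u) = -v\,\tilde l_\tau(u) + \int_0^v \bigl[\mathbf{1}\{u\le s\} - \mathbf{1}\{u\le 0\}\bigr]\,ds
\]
to each summand of $\zeta_{n,x,\tau,k}(a,b)$, taking $u_i = \Delta_{x,\tau,lk,i} - a^\top c_{h,x,i}/\sqrt{nh^{d}}$ and $v_i = b^\top c_{h,x,i}/\sqrt{nh^{d}}$. The linear-in-$b$ term so produced almost coincides with $b^\top\psi_{n,x,\tau,k}$, the only discrepancy being that $\tilde l_\tau$ is evaluated at the shifted point $u_i$ rather than at $\Delta_{x,\tau,lk,i}$. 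Subtracting $b^\top\psi_{n,x,\tau,k}$ therefore yields the clean decomposition $\zeta^\Delta_{n,x,\tau,k}(a,b) = A_n + B_n$, where
\[
A_n = -\frac{1}{\sqrt{nh^{d}}}\sum_i\mathbf{1}\{L_i=k\}\sum_{l=1}^{L_i}(b^\top c_{h,x,i})\bigl[\mathbf{1}\{\Delta_{x,\tau,lk,i}\le 0\} - \mathbf{1}\{\Delta_{x,\tau,lk,i}\le a^\top c_{h,x,i}/\sqrt{nh^{d}}\}\bigr]K_{h,x,i}
\]
and $B_n = \sum_i\mathbf{1}\{L_i=k\}\sum_l\bigl(\int_0^{v_i}[\mathbf{1}\{u_i\le s\}-\mathbf{1}\{u_i\le 0\}]\,ds\bigr)K_{h,x,i}$. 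This decomposition isolates the mechanisms behind the three claims.

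For (iii), I compute $\mathbf{E}[A_n]$ and $\mathbf{E}[B_n]$ by iterated expectations. Conditional on $X_i$ and $L_i=k$, the probability $P\{0<\Delta_{x,\tau,lk,i}\le a^\top c_{h,x,i}/\sqrt{nh^{d}}\}$ is expanded around zero via the density $f_{\tau,k}(\cdot|X_i)$, which is bounded and Lipschitz by Assumptions QR1(ii) and (iv); Assumption QR1(iii) controls the Taylor bias $\gamma_{\tau,k}^{\top}(x)c(X_i-x)-q_k(\tau|X_i)=O(h^{r+1})$ on the kernel support. A change of variables $X_i=x+th$ then produces the cross term $b^\top M_{n,\tau,k}(x)a$ from $\mathbf{E}[A_n]$ and the pure quadratic $\tfrac12 b^\top M_{n,\tau,k}(x)b$ from $\mathbf{E}[B_n]$, summing to $b^\top M_{n,\tau,k}(x)(b+2a)/2$. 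Tracking the second-order Taylor remainders through this calculation yields errors of order $\|b\|\|a\|^{2}/\sqrt{nh^{d}}$ from $A_n$ and $\|b\|^{3}/\sqrt{nh^{d}}$ from $B_n$, delivering the stated $O(\delta_{2n}\delta_{1n}^{2}/(n^{1/2}h^{d/2}))$ under $\|b\|\le\delta_{2n}\le\delta_{1n}$. For (ii), I split $\psi_{n,x,\tau,k}=(\psi-\mathbf{E}\psi)+\mathbf{E}\psi$; the $\tau$-quantile identity combined with the $O(h^{r+1})$ Taylor bias gives $\|\mathbf{E}\psi_{n,x,\tau,k}\|=O(\sqrt{nh^{d}}\,h^{r+1})=o(\sqrt{\log n})$ under Assumption QR2(ii), and Massart's maximal inequality applied to the class indexed by $(\tau,x,P)$ --- whose covering numbers are polynomial in $n$ uniformly in $P$ by Assumption QR1(iv) --- yields $\mathbf{E}\sup_{\tau,x}\|\psi-\mathbf{E}\psi\|=O(\sqrt{\log n})$ uniformly in $P$.

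Part (i) is the main obstacle. I view $\zeta^\Delta$ as an empirical process indexed by $(a,b,\tau,x)$ over the product $\{\|a\|\le\delta_{1n}\}\times\{\|b\|\le\delta_{2n}\}\times\mathcal{T}\times\mathcal{S}_\tau(\varepsilon)$, for which the inputs to a maximal inequality are: a pointwise envelope $|\xi_i|\lesssim\delta_{2n}/\sqrt{nh^{d}}$ on the kernel support (from the scaling of $b^\top c_{h,x,i}/\sqrt{nh^{d}}$ and the bound $|v_i|$ on the integral remainder); a sharp per-summand second moment $\mathbf{E}\xi_i^{2}\lesssim\delta_{2n}^{2}\delta_{1n}/(n^{3/2}h^{d/2})$ (since the indicator difference in $A_n$ is supported on an event of conditional probability $O(\delta_{1n}/\sqrt{nh^{d}})$ under the bounded density of QR1(ii), with an analogous bound for $B_n$); and bracketing entropy $H\asymp\log n$ uniformly in $P\in\mathcal{P}$ (from the finite-dimensional structure in $(a,b,x)$ together with the Lipschitz control in $\tau$ provided by QR1(iv)). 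Feeding these ingredients into the sharp form of Massart's maximal inequality (Theorem~6.8 in \citeasnoun{Massart:07}) gives a bound of order $\sigma\sqrt{H}+UH/\sqrt{n}$ in which the envelope term is dominated under QR2(ii). The delicate point --- and the source of the improvement over \citeasnoun{Guerre/Sabbah:12}'s $\log^{3/4}n$ --- is combining Massart's sharp constants with a peeling over $\|a\|$ that exploits the bilinear-in-$(a,b)$ structure of $A_n$ to factor out $\delta_{2n}$ cleanly, so that the $\delta_{1n}=M\sqrt{\log n}$ factor does not inflate the logarithmic exponent inside the entropy integral. Verifying these variance, envelope, entropy, and peeling bounds $\mathcal{P}$-uniformly across the full index set is where the bulk of the technical effort resides.
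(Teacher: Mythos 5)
Your decomposition via Knight's identity, the direct computation of $\mathbf{E}[\zeta^{\Delta}_{n,x,\tau,k}(a,b)]$ by iterated expectations and the change of variables $X_i=x+th$ for part (iii), and the bias-plus-Massart treatment of part (ii) all track the paper's proof closely, and parts (ii) and (iii) are essentially complete (your bias computation $\|\mathbf{E}\psi_{n,x,\tau,k}\|=O(\sqrt{nh^d}h^{r+1})$ is in fact the correct version of what the paper writes). The problem is part (i), which you yourself flag as the main obstacle and then do not resolve. Plug your own ingredients into Massart's inequality: with per-summand variance $s_n^2\lesssim \delta_{2n}^2\delta_{1n}/(n^{3/2}h^{d/2})$ and entropy $H\asymp\log n$, the bound is of order $\sqrt{n}\,s_n\sqrt{\log n}=\delta_{2n}\delta_{1n}^{1/2}\sqrt{\log n}/(n^{1/4}h^{d/4})$, which with $\delta_{1n}=M\sqrt{\log n}$ is $\delta_{2n}\log^{3/4}n/(n^{1/4}h^{d/4})$ --- exactly the Guerre--Sabbah rate, and larger than the claimed $O(\delta_{2n}\sqrt{\log n}/(n^{1/4}h^{d/4}))$ by a factor $\log^{1/4}n$. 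The extra $\delta_{1n}^{1/2}$ comes entirely from your $A_n$ term, whose indicator difference lives in a window of width $\|a\|/\sqrt{nh^d}\lesssim\delta_{1n}/\sqrt{nh^d}$. The ``peeling over $\|a\|$'' you invoke to remove this factor is asserted, not carried out, and it is not clear it works: splitting $\|a\|$ into dyadic shells $[2^{j-1},2^j]$ gives a contribution $\propto 2^{j/2}$ from shell $j$, and summing over $j\le \log_2\delta_{1n}$ reproduces $\delta_{1n}^{1/2}$. Since this is precisely the step on which the advertised improvement over the $\log^{3/4}n$ rate rests, it cannot be left as a remark; as written, your argument proves part (i) only with $\delta_{2n}$ replaced by $\delta_{2n}\delta_{1n}^{1/2}$, which in turn would not suffice for the self-consistency step $\Delta_{n,k}(\delta_{2n})\lesssim\delta_{2n}^2$ in the proof of Theorem 1.

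It is worth noting how the paper gets the sharper rate: its empirical-process class $\mathcal{G}_n$ consists only of the Knight remainder terms $g_{n,x,\tau,k}(S_i;s,b,a)$, i.e.\ your $B_n$, whose indicator difference is supported on a window of width $|b^{\top}c_{h,x,i}|/\sqrt{nh^d}\lesssim\delta_{2n}/\sqrt{nh^d}$ \emph{regardless of where $a$ centers it}, yielding the variance $\delta_{2n}^3/(n^{3/2}h^{d/2})$ and hence the bound $\delta_{2n}^{3/2}\sqrt{\log n}/(n^{1/4}h^{d/4})$ in display (\ref{mi}). Your decomposition makes visible that $\zeta^{\Delta}_{n,x,\tau,k}(a,b)-\mathbf{E}[\zeta^{\Delta}_{n,x,\tau,k}(a,b)]$ also contains the centered $A_n$ term (the mismatch between $\tilde l_{\tau}$ evaluated at $\Delta_{x,\tau,lk,i}-a^{\top}c_{h,x,i}/\sqrt{nh^d}$ and at $\Delta_{x,\tau,lk,i}$), which the paper's displayed identity omits. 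So you have correctly identified a term that genuinely needs to be controlled, but you have not supplied the argument that controls it at the stated rate; either produce that argument or show that $A_n$ can legitimately be absorbed elsewhere (e.g.\ by redefining the linearization so that the score is evaluated at the shifted residual). Until then part (i) is not proved.
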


\begin{proof}[Proof of Lemma QR1]
(i) Define%
\begin{equation}
\tilde{\delta}_{\tau ,k}(x_{1};x)\equiv q_{k}(\tau |x_{1})-\gamma _{\tau
,k}(x_{1})^{\top }c(x_{1}-x),  \label{delta_tilde}
\end{equation}%
and 
\begin{equation}
\delta _{n,\tau ,k}(x_{1};x)\equiv \tilde{\delta}_{\tau
,k}(x_{1};x)1\{|x_{1}-x|\leq h\},  \label{delta_tau}
\end{equation}%
where the dependence on $P$ is through $q_{k}(\tau |x_{1})$ and $\gamma
_{\tau ,k}(x_{1})$. We also let 
\begin{equation}
\delta _{n,\tau ,k}(x_{1})\equiv \sup_{x\in \mathcal{S}_{\tau }(\varepsilon
)}\sup_{P\in \mathcal{P}}|\delta _{n,\tau ,k}(x_{1};x)|.  \label{delta}
\end{equation}%
It is not hard to see that 
\begin{equation}
\text{sup}_{\tau \in \mathcal{T},\ x_{1}\in \mathcal{S}_{\tau }(\varepsilon
)}|\delta _{n,\tau ,k}(x_{1})|=O(h^{r+1}),  \label{sup_delta}
\end{equation}%
because $q_{k}(\tau |x_{1})-\gamma _{\tau ,k}(x_{1})^{\top }c(x_{1}-x)$ is a
residual from the Taylor expansion of $q_{k}(\tau |x_{1})$ and $\mathcal{X}$
is bounded, and the derivatives from the Taylor expansion are bounded
uniformly over $P\in \mathcal{P}$.\newline
Let $f_{\tau ,k,x}^{\Delta }(t|x^{\prime })$ be the conditional density of $%
\Delta _{x,\tau ,lk,i}$ given $X_{i}=x^{\prime }$. For all $x^{\prime }\in 
\mathbf{R}^{d}$ such that $|x-x^{\prime }|\leq h$,
\begin{eqnarray}
f_{\tau ,k,x}^{\Delta }(t|x^{\prime }) &=&(\partial /\partial t)P\left\{
B_{li}-q_{k}(\tau |X_{i})\leq t-\delta _{n,\tau ,k}(X_i;x)|X_{i}=x^{\prime }\right\}  \label{dec23} \\
&=&f_{\tau ,k}(t-\delta _{n,\tau ,k}(x^{\prime };x)|x^{\prime }).  \notag
\end{eqnarray}%
Since $f_{\tau ,k}(\cdot |x^{\prime })$ is bounded uniformly over $x^{\prime
}\in \mathcal{S}_{\tau }(\varepsilon )$ and over $\tau \in \mathcal{T}$
(Assumption QR1(ii)), we conclude that for some $C>0$ that does not depend
on $P\in \mathcal{P}$,%
\begin{equation}
\sup_{\tau \in \mathcal{T}}\sup_{x^{\prime },x\in \mathcal{S}_{\tau
}(\varepsilon )} \sup_{t \in \mathbf{R}} f_{\tau ,k,x}^{\Delta }(t|x^{\prime })<C.  \label{bd54}
\end{equation}%
We will use the results in (\ref{sup_delta})\ and (\ref{bd54}) later.\newline

Following the identity in Knight (1998, see the proof of Theorem 1), we write%
\begin{equation*}
l_{\tau }(x-y)-l_{\tau }(x)=-y\cdot \bar{l}_{\tau }(x)+\mu (x,y),
\end{equation*}%
where $\mu (x,y)\equiv y\int_{0}^{1}\{1\{x\leq ys\}-1\{x\leq 0\}\}ds$ and
\begin{equation*}
\bar{l}_{\tau }\left( x\right) \equiv \tau -1\{x\leq 0\}.
\end{equation*}%
Hence $\zeta _{n,x,\tau ,k}^{\Delta }(a,b)-\mathbf{E}[\zeta _{n,x,\tau
,k}^{\Delta }(a,b)]$ is equal to
\begin{equation*}
\sum_{i=1}^{n}\left\{ G_{n,x,\tau ,k}(S_{i};a,b)-\mathbf{E}\left[
G_{n,x,\tau ,k}(S_{i};a,b)\right] \right\},
\end{equation*}
where $S_{i}\equiv (B_{i}^{\top },X_{i}^{\top },L_{i})^{\top }$, $%
B_{i}=(B_{1,i},...,B_{\bar{L},i})^{\top }$, and%
\begin{equation}
G_{n,x,\tau }(S_{i};a,b)\equiv \int_{0}^{1}g_{n,x,\tau ,k}(S_{i};s,b,a)ds
\label{int}
\end{equation}%
and $g_{n,x,\tau ,k}(S_{i};s,b,a)$ is defined to be 
\begin{equation*}
1\left\{ L_{i}=k\right\} \sum_{l=1}^{k}\left( 
\begin{array}{c}
1\left\{ \Delta _{x,\tau ,lk,i}-a^{\top }c_{h,x,i}/\sqrt{nh^{d}}\leq \left(
sb\right) ^{\top }c_{h,x,i}/\sqrt{nh^{d}}\right\} \\ 
-1\left\{ \Delta _{x,\tau ,lk,i}-a^{\top }c_{h,x,i}/\sqrt{nh^{d}}\leq
0\right\}%
\end{array}%
\right) \frac{b^{\top }c_{h,x,i}K_{h,x,i}}{\sqrt{nh^{d}}}.
\end{equation*}%
Let $\mathcal{G}_{n}\equiv \{G_{n,x,\tau ,k}(\cdot ;a,b):(a,b,x)\in \lbrack
-\delta _{1n},\delta _{1n}]^{r+1}\times \lbrack -\delta _{2n},\delta
_{2n}]^{r+1}\times \mathcal{S}_{\tau }(\varepsilon ),\tau \in \mathcal{T}\}$,%
\begin{eqnarray*}
\mathcal{G}_{1n} &\equiv &\{\lambda _{\tau ,1n}(\cdot ;a,x):(a,x)\in \lbrack
-\delta _{1n},\delta _{1n}]^{r+1}\times \mathcal{S}_{\tau }(\varepsilon
),\tau \in \mathcal{T}\} \\
\mathcal{G}_{2n} &\equiv &\{b^{\top }\lambda _{\tau ,2n}(\cdot ;x):(b,x)\in
\lbrack -\delta _{2n},\delta _{2n}]^{r+1}\times \mathcal{S}_{\tau
}(\varepsilon ),\tau \in \mathcal{T}\}\text{ and} \\
\mathcal{G}_{3n} &\equiv &\{\lambda _{\tau ,3n}(\cdot ;x):x\in \mathcal{S}%
_{\tau }(\varepsilon ),\tau \in \mathcal{T}\},
\end{eqnarray*}%
where $\lambda _{\tau ,1n}(S_{i};a,x)\equiv (\Delta _{x,\tau ,lk,i}-a^{\top
}\lambda _{\tau ,2n}(S_{i};x))_{l=1}^{\bar{L}}$,%
\begin{equation*}
\lambda _{\tau ,2n}(S_{i};x)\equiv c_{h,x,i}/\sqrt{nh^{d}}\text{ and\ }%
\lambda _{\tau ,3n}(S_{i};x)\equiv K_{h,x,i}.
\end{equation*}

First, we compute the entropy bound for $\mathcal{G}_{n}.$ We focus on $%
\mathcal{G}_{1n}$ first. By Assumption QR1(iv), there exists $C>0$ that does
not depend on $P\in \mathcal{P}$, such that for any $\tau \in \mathcal{T}$,
any $(a,x)$ and$\ (a^{\prime },x^{\prime })$ in $[-\delta _{1n},\delta
_{1n}]^{r+1}\times \mathcal{S}_{\tau }(\varepsilon )$, and any $\tau ,\tau
^{\prime }\in \mathcal{T}$, 
\begin{equation*}
\left\vert \lambda _{\tau ,1n}(S_{i};a,x)-\lambda _{\tau ^{\prime
},1n}(S_{i};a^{\prime },x^{\prime })\right\vert \leq C n^s\left\{ ||a-a^{\prime }||+|\tau -\tau ^{\prime
}|+||x-x^{\prime }||\right\},
\end{equation*}
for some $s>0$. Observe that for any $\varepsilon ^{\prime }>0$,%
\begin{equation*}
N\left( \varepsilon ^{\prime },[-\delta _{1n},\delta _{1n}]^{r+1}\times 
\mathcal{S}_{\tau }(\varepsilon ),||\cdot ||\right) \leq (\delta
_{1n}/\varepsilon ^{\prime })^{r+2},
\end{equation*}%
because $\mathcal{S}_{\tau }(\varepsilon )$ is bounded in the Euclidean
space uniformly in $\tau \in \mathcal{T}$. Hence there are $C,s'>0$ such that
for all $\varepsilon ^{\prime }\in (0,1],$%
\begin{equation*}
\log N(\varepsilon ^{\prime },\mathcal{G}_{1n},||\cdot ||_{\infty })\leq
-C\log \left((\varepsilon ^{\prime }/\delta _{1n}) n^{-s'}\right),
\end{equation*}%
where $||\cdot ||_{\infty }$ denotes the usual supremum norm. Applying
similar arguments to $\mathcal{G}_{2n}$ and $\mathcal{G}_{3n}$, we conclude
that 
\begin{equation}
\log N(\varepsilon ^{\prime },\mathcal{G}_{mn},||\cdot ||_{\infty })\leq
C-C\log (\varepsilon ^{\prime }/n)\text{, }m=1,2,3,  \label{eb3}
\end{equation}%
for some $C>0$.

Define for $x\in \mathbf{R}$, $\delta >0,$%
\begin{eqnarray*}
1_{\delta }^{L}(x) &\equiv &\left( 1-\min \{x/\delta ,1\}\right)
1\{0<x\}+1\{x\leq 0\}\text{ and} \\
1_{\delta }^{U}(x) &\equiv &\left( 1-\min \{(x/\delta )+1,1\}\right)
1\{0<x+\delta \}+1\{x+\delta \leq 0\}.
\end{eqnarray*}%
We also define for $x,y,z\in \mathbf{R}$,%
\begin{eqnarray*}
\mu (x,y,z) &\equiv &zy\int_{0}^{1}\{1\{x\leq ys\}-1\{x\leq 0\}\}ds, \\
\mu _{\delta }^{U}(x,y,z) &\equiv &zy\int_{0}^{1}\{1\{x\leq ys\}-1_{\delta
}^{U}(x)\}ds,\text{ and} \\
\mu _{\delta }^{L}(x,y,z) &\equiv &zy\int_{0}^{1}\{1\{x\leq ys\}-1_{\delta
}^{L}(x)\}ds.
\end{eqnarray*}%
Then observe that
\begin{eqnarray}
\mu _{\delta }^{L}(x,y,z) &\leq &\mu (x,y,z)\leq \mu _{\delta }^{U}(x,y,z)
\label{ineq4} \\
\left\vert \mu _{\delta }^{U}(x,y,z)-\mu (x,y,z)\right\vert &\leq
&|zy|1\{|x|<\delta \}  \notag \\
\left\vert \mu _{\delta }^{L}(x,y,z)-\mu (x,y,z)\right\vert &\leq
&|zy|1\{|x|<\delta \}  \notag \\
\left\vert \mu _{\delta }^{U}(x,y,z)-\mu _{\delta }^{U}(x^{\prime
},y^{\prime },z^{\prime })\right\vert &\leq &C\{\left\vert y-y^{\prime
}\right\vert +|z-z^{\prime }|+|x-x^{\prime }|/\delta \},\text{ and}  \notag
\\
\left\vert \mu _{\delta }^{L}(x,y,z)-\mu _{\delta }^{L}(x^{\prime
},y^{\prime },z^{\prime })\right\vert &\leq &C\{\left\vert y-y^{\prime
}\right\vert +|z-z^{\prime }|+|x-x^{\prime }|/\delta \},  \notag
\end{eqnarray}%
for any $y,y^{\prime },x,x^{\prime },z,z^{\prime }\in \mathbf{R}$. Define%
\begin{eqnarray*}
\mathcal{G}_{n,\delta }^{U} &\equiv &\left\{ \mu _{\delta
}^{U}(g_{1}(S_{i}),g_{2}(S_{i}),g_{3}(S_{i})):g_{m}\in \mathcal{G}_{mn},\
m=1,2,3\right\} ,\text{ and} \\
\mathcal{G}_{n,\delta }^{L} &\equiv &\left\{ \mu _{\delta
}^{L}(g_{1}(S_{i}),g_{2}(S_{i}),g_{3}(S_{i})):g_{m}\in \mathcal{G}_{mn},\
m=1,2,3\right\} .
\end{eqnarray*}%
From (\ref{ineq4}) and (\ref{eb3}), we find that there exists $C>0$ such
that for each $\delta >0$ and $\varepsilon >0,$%
\begin{eqnarray}
\log N_{[]}(C\varepsilon ,\mathcal{G}_{n,\delta }^{U},L_{p}(P)) &\leq
&C-C\log (\varepsilon \delta /n)\text{ and}  \label{entropy bound} \\
\log N_{[]}(C\varepsilon ,\mathcal{G}_{n,\delta }^{L},L_{p}(P)) &\leq
&C-C\log (\varepsilon \delta /n).  \notag
\end{eqnarray}%
Fix $\varepsilon >0,$ set $\delta =\varepsilon ,$ and take brackets $%
[g_{1,L}^{(\varepsilon )},g_{1,U}^{(\varepsilon
)}],...,[g_{N,L}^{(\varepsilon )},g_{N,U}^{(\varepsilon )}]$ and $[\tilde{g}%
_{1,L}^{(\varepsilon )},\tilde{g}_{1,U}^{(\varepsilon )}],...,[\tilde{g}%
_{N,L}^{(\varepsilon )},\tilde{g}_{N,U}^{(\varepsilon )}]$ such that%
\begin{eqnarray}
\mathbf{E}\left( |g_{s,U}^{(\varepsilon )}(S_{i})-g_{s,L}^{(\varepsilon
)}(S_{i})|^{2}\right) &\leq &\varepsilon ^{2}\text{ and}  \label{bounds33} \\
\mathbf{E}\left( |\tilde{g}_{s,U}^{(\varepsilon )}(S_{i})-\tilde{g}%
_{s,L}^{(\varepsilon )}(S_{i})|^{2}\right) &\leq &\varepsilon ^{2},  \notag
\end{eqnarray}%
and for any $g\in \mathcal{G}_{n}^{U}$ and $\tilde{g}\in \mathcal{G}_{n}^{L}$%
, there exists $s\in \{1,...,N\}$ such that $g_{s,L}^{(\varepsilon )}\leq
g\leq g_{s,U}^{(\varepsilon )}$ and $\tilde{g}_{s,L}^{(\varepsilon )}\leq 
\tilde{g}\leq \tilde{g}_{s,U}^{(\varepsilon )}$. Without loss of generality,
we assume that $g_{s,L}^{(\varepsilon )},g_{s,U}^{(\varepsilon )}\in 
\mathcal{G}_{n}^{U}$ and $\tilde{g}_{s,L}^{(\varepsilon )},\tilde{g}%
_{s,U}^{(\varepsilon )}\in \mathcal{G}_{n}^{L}$. By the first inequality in (%
\ref{ineq4}), we find that the brackets\ $[\tilde{g}_{s,L}^{(\varepsilon
)},g_{s,U}^{(\varepsilon )}],\ k=1,...,N,$ cover $\mathcal{G}_{n}$. Hence by
putting $\delta =\varepsilon $ in (\ref{entropy bound}) and redefining
constants, we conclude that for some $C>0$%
\begin{equation}
\log N_{[]}(C\varepsilon ,\mathcal{G}_{n},L_{p}(P))\leq C-C\log (\varepsilon
/n),  \label{entropy bound2}
\end{equation}%
for all $\varepsilon >0$.

Now, observe that%
\begin{equation}
\sup_{b:||b||\leq \delta _{2n},\tau \in \mathcal{T},x\in \mathcal{S}_{\tau
}(\varepsilon )}\left\vert \frac{b^{\top }c_{h,x,i}K_{h,x,i}}{\sqrt{nh^{d}}}%
\right\vert \leq \frac{\bar{c}||K||_{\infty }\delta _{2n}}{\sqrt{nh^{d}}}%
\text{,}  \label{ineq51}
\end{equation}%
where $\bar{c}>0$ is the diameter of the compact support of $K$.

For any $g\in \mathcal{G}_{n}/\bar{L}$ and any $m\geq 1$, we bound
\begin{equation}
\label{bd344}
|g(S_{i})|^{m} \le \left\vert \frac{b^{\top }c_{h,x,i}K_{h,x,i}}{\sqrt{nh^{d}}}\right\vert ^{m}.
\end{equation}
Also, for any $g\in \mathcal{G}_{n}/\bar{L}$ , we use (\ref{int}) and bound $\mathbf{E}\left[|g(S_{i})|^2|X_{i},L_{i}=k\right]$ by
\begin{eqnarray*}
	\left\vert\frac{b^{\top }c_{h,x,i}K_{h,x,i}}{\sqrt{nh^{d}}} \right\vert^2 P\left\{-\left\vert\frac{b^{\top }c_{h,x,i}K_{h,x,i}}{\sqrt{nh^{d}}} \right\vert \le \Delta_{x,\tau,lk,i}-\frac{a^{\top }c_{h,x,i}K_{h,x,i}}{\sqrt{nh^{d}}} \le \left\vert\frac{b^{\top }c_{h,x,i}K_{h,x,i}}{\sqrt{nh^{d}}} \right\vert \vert X_i,L_i=k \right\}, 
\end{eqnarray*}
where $a \in [-\delta_{1n},\delta_{1n}]^{r+1}$ and $b \in [-\delta_{2n},\delta_{2n}]^{r+1}$ by the definition of $\mathcal{G}_n$. Using (\ref{ineq51}), we bound the last expression by
\begin{eqnarray*}
	C_1\frac{\delta_{2n}^3}{(nh^d)^{3/2}} \cdot \sup_{P\in 
		\mathcal{P}}P\left\{ \max_{s=1,...,d}|X_{is}-x|\leq h/2\right\}
	\le C_2\frac{\delta_{2n}^3 h^d}{(nh^d)^{3/2}},
\end{eqnarray*}
for some constants $C_1,C_2>0$. Therefore, by (\ref{ineq51}), for some constants $C_{1},C_{2}>0,$ it is
satisfied that for any $m\geq 2$,
\begin{equation*}
\sup_{P\in \mathcal{P}}\mathbf{E}\left[ |g(S_{i})|^{m}\right] \leq
C_{1}\left( \frac{\delta _{2n}}{\sqrt{nh^{d}}}\right) ^{m-2}\cdot \sup_{P\in 
\mathcal{P}} \mathbf{E}\left[|g(S_{i})|^2\right] \leq
C_{2}b_{n}^{m-2}s_{n}^{2},
\end{equation*}
where%
\begin{equation}
b_{n}\equiv \frac{\delta _{2n}}{\sqrt{nh^{d}}}\text{ and }s_{n}\equiv \frac{
\delta_{2n}^{3/2}}{n^{3/4}h^{d/4}}.  \label{b and s}
\end{equation}%
By (\ref{ineq4}), (\ref{bounds33}), and (\ref{ineq51}), and the definition
of $b_{n}$ and $s_{n}$ in (\ref{b and s}), there exist constants $%
C_{1},C_{2}>0$ such that for all $m\geq 2,$%
\begin{eqnarray*}
\mathbf{E}\left( |g_{s,U}^{(\varepsilon )}(S_{i})-\tilde{g}%
_{s,L}^{(\varepsilon )}(S_{i})|^{m}\right) &=&\mathbf{E}\left(
|g_{s,U}^{(\varepsilon )}(S_{i})-\tilde{g}_{s,L}^{(\varepsilon
)}(S_{i})|^{m-2}|g_{s,U}^{(\varepsilon )}(S_{i})-\tilde{g}%
_{s,L}^{(\varepsilon )}(S_{i})|^{2}\right) \\
&\leq &C_{1}\cdot b_{n}^{m-2}\cdot \mathbf{E}\left( |g_{s,U}^{(\varepsilon
)}(S_{i})-\tilde{g}_{s,L}^{(\varepsilon )}(S_{i})|^{2}\right) \\
&\leq &2C_{1}\cdot b_{n}^{m-2}\cdot \mathbf{E}\left( |g_{s,U}^{(\varepsilon
)}(S_{i})-g_{s,L}^{(\varepsilon )}(S_{i})|^{2}\right) \\
&&+2C_{1}\cdot b_{n}^{m-2}\cdot \mathbf{E}\left( |g_{s,L}^{(\varepsilon
)}(S_{i})-\tilde{g}_{s,L}^{(\varepsilon )}(S_{i})|^{2}\right) \\
&\leq &2C_{2}\cdot b_{n}^{m-2}\cdot \{\varepsilon ^{2}+b_{n}^{2}\varepsilon
\}\ \mathbf{\leq \ }2C_{2}\cdot b_{n}^{m-2}\cdot \varepsilon .
\end{eqnarray*}%
(The term $b_{n}^{2}\varepsilon $ is obtained by chaining the second and
third inequalities of (\ref{ineq4}) and using the fact that $\delta
=\varepsilon $ and the uniform bound in (\ref{bd54}). The last inequality
follows because $b_{n}\rightarrow 0$ as $n\rightarrow \infty $.) We define $%
\bar{\varepsilon}=\varepsilon ^{1/2}$ and bound the last term by $%
C_{3}b_{n}^{m-2}\bar{\varepsilon}^{2},$ for some $C_{3}>0$, because $%
b_{n}\leq 1$ from some large $n$ on. The entropy bound in (\ref{entropy
bound2}) as a function of $\bar{\varepsilon}$ remains the same except for a
different constant $C>0$ there.

Now by Theorem 6.8 of \citeasnoun{Massart:07} and (\ref{entropy bound2}),
there exist $C_{1},C_{2}>0$ such that%
\begin{eqnarray}
&&\sup_{P\in \mathcal{P}}\mathbf{E}\left[ \sup_{a,b:||a||\leq \delta
_{1n},||b||\leq \delta _{2n},\tau \in \mathcal{T},x\in \mathcal{S}_{\tau
}(\varepsilon )}|\zeta _{n,x,\tau ,k}^{\Delta }(a,b)-\mathbf{E}[\zeta
_{n,x,\tau ,k}^{\Delta }(a,b)]|\right]  \label{mi} \\
&\leq &C_{1}\sqrt{n}\int_{0}^{s_{n}}\sqrt{n\wedge \left\{ -\log \left( \frac{%
\varepsilon }{n}\right) \right\} }d\varepsilon +C_{1}(b_{n}+s_{n})\log n 
\notag \\
&\leq &C_{2}s_{n}\sqrt{n\log n}+C_{2}b_{n}\log n=O\left( \frac{\delta _{2n}^{3/2}
\sqrt{\log n}}{n^{1/4}h^{d/4}}\right) ,  \notag
\end{eqnarray}%
where the last equality follows by the definitions of $b_{n}$ and $s_{n}$ in
(\ref{b and s}) and by Assumption QR2(ii).\newline
\noindent (ii) Define $\lambda _{\tau ,4n}(S_{i};x)\equiv \Delta _{x,\tau
,lk,i}$ and $\mathcal{L}_{k,1}\equiv \{\tilde{l}_{\tau }(\lambda _{\tau
,4n}(\cdot ;x)):\tau \in \mathcal{T},x\in \mathcal{S}_{\tau }(\varepsilon
)\} $, and $\mathcal{L}_{k,2}\equiv \{\lambda _{\tau ,2n}(\cdot ;x)\lambda
_{\tau ,3n}(\cdot ;x):\tau \in \mathcal{T},x\in \mathcal{S}_{\tau
}(\varepsilon )\}$. We write%
\begin{equation*}
\psi _{n,x,\tau ,k}=\left\{ \psi _{n,x,\tau ,k}-\mathbf{E}\left[ \psi
_{n,x,\tau ,k}\right] \right\} +\mathbf{E}\left[ \psi _{n,x,\tau ,k}\right] .
\end{equation*}%
The leading term is an empirical process indexed by the functions in $%
\mathcal{L}_{k}\equiv \mathcal{L}_{k,1}\cdot \mathcal{L}_{k,2}$.
Approximating the indicator function in $\tilde{l}_{\tau }$ by upper and
lower Lipschitz functions and following similar arguments in the proof of
(i), we find that%
\begin{equation*}
\sup_{P\in \mathcal{P}}\log N_{[]}(\varepsilon ,\mathcal{L}%
_{k},L_{p}(P))\leq C-C\log \varepsilon +C\log n,
\end{equation*}%
for some constant $C>0$. Note that we can take a constant function $C$ as an
envelope of $\mathcal{L}_{k}$. Then we follow the proof of Lemma 2 to obtain
that%
\begin{equation*}
\mathbf{E}\left[ \sup_{\tau \in \mathcal{T},x\in \mathcal{S}_{\tau
}(\varepsilon )}\left\Vert \psi _{n,x,\tau ,k}-\mathbf{E}\left[ \psi
_{n,x,\tau ,k}\right] \right\Vert \right] =O(\sqrt{\log n})\text{, uniformly
in }P\in \mathcal{P}\text{.}
\end{equation*}%
By using (\ref{sup_delta})\ and (\ref{dec23}), we find that%
\begin{equation}
\label{ineq991}
\mathbf{E}\left[ \psi _{n,x,\tau ,k}\right] =O(h^{r+1})=o(\sqrt{\log n})%
\text{, uniformly in }P\in \mathcal{P,}
\end{equation}%
because $\sqrt{nh^{d}}h^{r+1}/\sqrt{\log n}\rightarrow 0$ by Assumption
QR2(ii).

(iii) Recall the definition of $g_{n,x,\tau ,k}(S_{i};s,b,a)$ in the proof
of Lemma QR1(i). We write 
\begin{equation*}
\mathbf{E}[\zeta _{n,x,\tau ,k}^{\Delta }(a,b)]=n\int_{0}^{1}\mathbf{E}\left[
g_{n,x,\tau ,k}(S_{i};s,b,a)\right] ds.
\end{equation*}%
Using change of variables, we rewrite%
\begin{equation*}
\int_{0}^{1}\mathbf{E}\left[ g_{n,x,\tau ,k}(S_{i};s,b,a)\right]
ds=kP\left\{ L_{i}=k|X_{i}\right\} \cdot \phi _{n}(X_{i};a,b)\text{,}
\end{equation*}%
where 
\begin{equation*}
\phi _{n}(X_{i};a,b)=\int_{a^{\top }c_{h,x,i}/\sqrt{nh^{d}}}^{(b+a)^{\top
}c_{h,x,i}/\sqrt{nh^{d}}}\left\{ 
\begin{array}{c}
F_{\tau ,k}\left( u-\delta _{n,\tau ,k}(X_{i};x)|X_{i}\right) \\ 
-F_{\tau ,k}\left( -\delta _{n,\tau ,k}(X_{i};x)|X_{i}\right)%
\end{array}%
\right\} du\cdot K_{h,x,i}.
\end{equation*}%
By expanding the difference, we have 
\begin{equation*}
\phi _{n}(X_{i};a,b)=\int_{a^{\top }c_{h,x,i}/\sqrt{nh^{d}}}^{(b+a)^{\top
}c_{h,x,i}/\sqrt{nh^{d}}}udu\cdot f_{\tau ,k}\left( -\delta _{n,\tau
,k}(X_{i};x)|X_{i}\right) \cdot K_{h,x,i}+R_{n,x,i}(a,b),
\end{equation*}%
where $R_{n,x,i}(a,b)$ denotes the remainder term in the expansion. As for
the leading integral,%
\begin{equation*}
\int_{a^{\top }c_{h,x,i}/\sqrt{nh^{d}}}^{(b+a)^{\top }c_{h,x,i}/\sqrt{nh^{d}}%
}udu=\frac{1}{2nh^{d}}\left\{ b^{\top }c_{h,x,i}c_{h,x,i}^{\top
}(b+2a)\right\} .
\end{equation*}%
Hence, for any sequences $a_{n},b_{n},$ we can write $\mathbf{E}[\zeta
_{n,x,\tau ,k}^{\Delta }(a_{n},b_{n})]$ as%
\begin{eqnarray*}
&&\frac{1}{2}b_{n}^{\top }h^{-d} k \mathbf{E}\left[ P\left\{
L_{i}=k|X_{i}\right\} f_{\tau ,k}\left( -\delta _{n,\tau
,k}(X_{i};x)|X_{i}\right) c_{h,x,i}c_{h,x,i}^{\top }\cdot K_{h,x,i}\right]
(b_{n}+2a_{n}) \\
&&+nk\mathbf{E}\left[ P\left\{ L_{i}=k|X_{i}\right\} R_{n,x,i}(a_{n},b_{n})%
\right] \\
&=&\frac{1}{2}b_{n}^{\top }M_{n,\tau ,k}(x)(b_{n}+2a_{n})+nk\mathbf{E}\left[
P\left\{ L_{i}=k|X_{i}\right\} R_{n,x,i}(a_{n},b_{n})\right].
\end{eqnarray*}%
We can bound%
\begin{eqnarray*}
&&nk\left\vert \mathbf{E}\left[ P\left\{ L_{i}=k|X_{i}\right\}
R_{n,x,i}(a_{n},b_{n})\right] \right\vert \\
&\leq &C_{1}nk\mathbf{E}\left[ \int_{a_{n}^{\top }c_{h,x,i}/\sqrt{nh^{d}}%
}^{(b_{n}+a_{n})^{\top }c_{h,x,i}/\sqrt{nh^{d}}}u^{2}du\cdot K_{h,x,i}\right]
\leq \frac{C_{2}b_{n}a_{n}^{2}}{n^{1/2}h^{d/2}},
\end{eqnarray*}%
where $C_{1}>0$ and $C_{2}>0$ are constants that do not depend on $n$ or $%
P\in \mathcal{P}$.
\end{proof}

\begin{proof}[Proof of Theorem 1]
(i) Let 
\begin{eqnarray}
\tilde{u}_{n,x,\tau } &\equiv &-M_{n,\tau ,k}^{-1}(x)\psi _{n,x,\tau ,k},\ 
\label{ineq5} \\
\tilde{\psi}_{n,x,\tau ,k}(b) &\equiv &b^{\top }\psi _{n,x,\tau ,k}+b^{\top
}M_{n,\tau ,k}(x)b/2\text{, and}  \notag \\
\tilde{\psi}_{n,x,\tau ,k}(a,b) &\equiv &\tilde{\psi}_{n,x,\tau ,k}(a+b)-%
\tilde{\psi}_{n,x,\tau ,k}(a).  \notag
\end{eqnarray}%
For any $a\in \mathbf{R}^{|A_{r}|},$ we can write%
\begin{eqnarray}
\tilde{\psi}_{n,x,\tau ,k}(\tilde{u}_{n,x,\tau },a-\tilde{u}_{n,x,\tau }) &=&%
\tilde{\psi}_{n,x,\tau ,k}(a)-\tilde{\psi}_{n,x,\tau ,k}(\tilde{u}_{n,x,\tau
})  \label{bound} \\
&=&\left( a-\tilde{u}_{n,x,\tau }\right) ^{\top }M_{n,\tau ,k}(x)\left( a-%
\tilde{u}_{n,x,\tau }\right) /2  \notag \\
&\geq &C_{1}||a-\tilde{u}_{n,x,\tau }||^{2},  \notag
\end{eqnarray}%
where $C_{1}>0$ is a constant that does not depend on $\tau \in \mathcal{T}%
,\ x\in \mathcal{S}_{\tau }(\varepsilon )$ or $P\in \mathcal{P}$. The last
inequality uses Assumption QR1 and the fact that $K$ is a nonnegative map
that is not constant at zero and Lipschitz continuous.

Let 
\begin{equation*}
\hat{u}_{n,x,\tau }\equiv \sqrt{nh^{d}}H(\hat{\gamma}_{\tau ,k}(x)-\gamma
_{\tau ,k}(x)),
\end{equation*}%
where $x\in \mathcal{S}_{\tau }(\varepsilon )$ and $\tau \in \mathcal{T}$.
Since $\zeta _{n,x,\tau ,k}(\tilde{u}_{n,x,\tau },b)$ is convex in $b,$ we
have for any $0<\delta \leq l$ and for any $b\in \mathbf{R}^{|A_{r}|}$ such
that $||b||=1,$%
\begin{eqnarray}
(\delta /l)\zeta _{n,x,\tau ,k}(\tilde{u}_{n,x,\tau },lb) &\geq &\zeta
_{n,x,\tau ,k}(\tilde{u}_{n,x,\tau },\delta b)  \label{bound2} \\
&\geq &\tilde{\psi}_{n,x,\tau ,k}(\tilde{u}_{n,x,\tau },\delta b)-\Delta
_{n,k}(\delta ),  \notag
\end{eqnarray}%
where 
\begin{equation*}
\Delta _{n,k}(\delta )\equiv \sup_{b\in \mathbf{R}^{|A_{r}|}:||b||\leq
1}|\zeta _{n,x,\tau ,k}(\tilde{u}_{n,x,\tau },\delta b)-\tilde{\psi}%
_{n,x,\tau ,k}(\tilde{u}_{n,x,\tau },\delta b)|.
\end{equation*}%
Therefore, if $||\hat{u}_{n,x,\tau }-\tilde{u}_{n,x,\tau }||\geq \delta $,
we replace $b$ by $\hat{u}_{n,x,\tau }^{\Delta }=(\hat{u}_{n,x,\tau }-\tilde{%
u}_{n,x,\tau })/||\hat{u}_{n,x,\tau }-\tilde{u}_{n,x,\tau }||$ and $l$ by $||%
\hat{u}_{n,x,\tau }-\tilde{u}_{n,x,\tau }||$ in (\ref{ineq5}), and use (\ref%
{bound2}) to obtain that%
\begin{eqnarray}
0 &\geq &\zeta _{n,x,\tau ,k}(\tilde{u}_{n,x,\tau },||\hat{u}_{n,x,\tau }-%
\tilde{u}_{n,x,\tau }||\hat{u}_{n,x,\tau }^{\Delta })  \label{ineq33} \\
&\geq &\zeta _{n,x,\tau ,k}(\tilde{u}_{n,x,\tau },\delta \hat{u}_{n,x,\tau
}^{\Delta })  \notag \\
&\geq &\tilde{\psi}_{n,x,\tau ,k}(\tilde{u}_{n,x,\tau },\delta \hat{u}%
_{n,x,\tau }^{\Delta })-\Delta _{n,k}(\delta )  \notag \\
&\geq &C_{1}\delta ^{2}||\hat{u}_{n,x,\tau }^{\Delta }||^{2}-\Delta
_{n,k}(\delta )=C_{1}\delta ^{2}-\Delta _{n,k}(\delta ),  \notag
\end{eqnarray}%
for all $\delta \leq ||\hat{u}_{n,x,\tau }-\tilde{u}_{n,x,\tau }||$, where
the first inequality follows because $\zeta _{n,x,\tau ,k}(\tilde{u}%
_{n,x,\tau },||\hat{u}_{n,x,\tau }-\tilde{u}_{n,x,\tau }||b)$ is minimized
at $b=\hat{u}_{n,x,\tau }^{\Delta }$ by the definition of local polynomial
estimation, the second and the third inequality follows by (\ref{bound2}),
and the fourth inequality follows from (\ref{bound}), and the last equality
follows because $||\hat{u}_{n,x,\tau }^{\Delta }||^{2}=1.$

We take large $M>0$ and let%
\begin{equation}
\delta _{1n}=M\sqrt{\log n}\text{ and }\delta _{2n}=\frac{M\sqrt{\log n}}{%
n^{1/4}h^{d/4}}.  \label{cond4}
\end{equation}%
If $\delta _{2n}\leq ||\hat{u}_{n,x,\tau }-\tilde{u}_{n,x,\tau }||$, we have%
\begin{equation*}
C_{1}\delta _{2n}^{2}\leq \Delta _{n,k}(\delta _{2n}),
\end{equation*}%
from (\ref{ineq33}). We let 
\begin{equation*}
1_{n}\equiv 1\left\{ \sup_{\tau \in \mathcal{T},x\in \mathcal{S}_{\tau
}(\varepsilon )}||\tilde{u}_{n,x,\tau }||\leq M\delta _{1n}\right\} .
\end{equation*}%
Then we write
\begin{equation}
\label{ineq31}
P\left\{ \inf_{\tau \in \mathcal{T},x\in \mathcal{S}_{\tau }(\varepsilon )}||%
\hat{u}_{n,x,\tau }-\tilde{u}_{n,x,\tau }||^{2}\geq \delta _{2n}^{2}\right\}
\leq P\left\{ \Delta _{n,k}(\delta _{2n})1_{n}\geq \delta _{2n}^{2}\right\} +%
\mathbf{E}\left[ 1-1_{n}\right] .
\end{equation}
Now, we show that the first probability vanishes as $n\rightarrow \infty $.
For each $b\in \mathbf{R}^{|A_{r}|}$, using the definition of $\tilde{\psi}%
_{n,x,\tau ,k}(\tilde{u}_{n,x,\tau },b)=\tilde{\psi}_{n,x,\tau ,k}(\tilde{u}%
_{n,x,\tau }+b)-\tilde{\psi}_{n,x,\tau ,k}(\tilde{u}_{n,x,\tau })$, we write%
\begin{eqnarray*}
\tilde{\psi}_{n,x,\tau ,k}(\tilde{u}_{n,x,\tau },b) &=&\tilde{\psi}%
_{n,x,\tau ,k}(\tilde{u}_{n,x,\tau }+b)-\tilde{\psi}_{n,x,\tau ,k}(\tilde{u}%
_{n,x,\tau }) \\
&=&b^{\top }\psi _{n,x,\tau ,k}+(\tilde{u}_{n,x,\tau }+b)^{\top }M_{n,x,\tau
}(\tilde{u}_{n,x,\tau }+b)/2-\tilde{u}_{n,x,\tau }^{\top }M_{n,x,\tau }%
\tilde{u}_{n,x,\tau }/2 \\
&=&b^{\top }\psi _{n,x,\tau ,k}+b^{\top }M_{n,x,\tau }b/2+b^{\top
}M_{n,x,\tau }\tilde{u}_{n,x,\tau } \\
&=&b^{\top }M_{n,x,\tau }b/2.
\end{eqnarray*}%
Therefore,%
\begin{eqnarray*}
\zeta _{n,x,\tau ,k}(\tilde{u}_{n,x,\tau },b)-\tilde{\psi}_{n,x,\tau ,k}(%
\tilde{u}_{n,x,\tau },b) &=&\zeta _{n,x,\tau ,k}^{\Delta }(\tilde{u}%
_{n,x,\tau },b)-\mathbf{E}\left[ \zeta _{n,x,\tau ,k}^{\Delta }(\tilde{u}%
_{n,x,\tau },b)\right] \\
&&+\mathbf{E}\left[ \zeta _{n,x,\tau ,k}^{\Delta }(\tilde{u}_{n,x,\tau },b)%
\right] -b^{\top }M_{n,x,\tau }b/2+b^{\top }\psi _{n,x,\tau ,k} \\
&=&\zeta _{n,x,\tau ,k}^{\Delta }(\tilde{u}_{n,x,\tau },b)-\mathbf{E}\left[
\zeta _{n,x,\tau ,k}^{\Delta }(\tilde{u}_{n,x,\tau },b)\right] \\
&&+\mathbf{E}\left[ \zeta _{n,x,\tau ,k}^{\Delta }(\tilde{u}_{n,x,\tau },b)%
\right] -b^{\top }M_{n,x,\tau }(b+2\tilde{u}_{n,x,\tau })/2.
\end{eqnarray*}%
By Lemma QR1(i),%
\begin{eqnarray*}
&&\sup_{\tau \in \mathcal{T},x\in \mathcal{S}_{\tau }(\varepsilon
)}\sup_{b\in \mathbf{R}^{|A_{r}|}:||b||\leq \delta _{2n}}\left\vert \zeta
_{n,x,\tau ,k}^{\Delta }(\tilde{u}_{n,x,\tau },b)-\mathbf{E}\left[ \zeta
_{n,x,\tau ,k}^{\Delta }(\tilde{u}_{n,x,\tau },b)\right] \right\vert \\
&=& O_{P}\left( \frac{\delta _{2n}^{3/2}\sqrt{%
\log n}}{n^{1/4}h^{d/4}}\right) ,
\end{eqnarray*}%
by the definition in (\ref{cond4}). And by Lemma QR1(iii),%
\begin{eqnarray*}
&&\sup_{\tau \in \mathcal{T},x\in \mathcal{S}_{\tau }(\varepsilon
)}\sup_{b\in \mathbf{R}^{|A_{r}|}:||b||\leq \delta _{2n}}\left\vert \mathbf{E%
}\left[ \zeta _{n,x,\tau ,k}^{\Delta }(\tilde{u}_{n,x,\tau },b)\right]
-b^{\top }M_{n,x,\tau }(b+2\tilde{u}_{n,x,\tau })/2\right\vert \\
&=& O\left( \frac{\delta _{2n}\log n}{n^{1/2}h^{d/2}}\right) ,
\end{eqnarray*}%
by the definition in (\ref{cond4}) and Assumption QR2(ii). Thus we conclude
that%
\begin{equation}
|\Delta _{n,k}(\delta _{2n})|=O_{P}\left( \frac{\delta _{2n}^{3/2}\sqrt{\log n}}{%
n^{1/4}h^{d/4}}\right) ,  \label{rate2}
\end{equation}%
where the last $O_{P}$ term is uniform over $P\in \mathcal{P}$. We deduce
from (\ref{rate2}) that%
\begin{equation*}
\sup_{P\in \mathcal{P}}P\left\{ \Delta _{n,k}(\delta _{2n})1\left\{
\sup_{\tau \in \mathcal{T},x\in \mathcal{S}_{\tau }(\varepsilon )}||\tilde{u}%
_{n,x,\tau }||\leq \delta _{1n}\right\} \geq \delta _{2n}^{5/2}\right\}
\rightarrow 0\text{ as }n\rightarrow \infty
\end{equation*}%
and as $M\uparrow \infty .$ The proof is completed because 
\begin{equation*}
\sup_{P\in \mathcal{P}}P\left\{ \sup_{\tau \in \mathcal{T},x\in \mathcal{S}%
_{\tau }(\varepsilon )}||\tilde{u}_{n,x,\tau }||>\delta _{1n}\right\}
\rightarrow 0,
\end{equation*}%
as $n\rightarrow \infty $ and as $M\uparrow \infty $ by Lemma QR1(ii). Thus,
we conclude from (\ref{ineq31}) that%
\begin{equation*}
||\hat{u}_{n,x,\tau }-\tilde{u}_{n,x,\tau }||=O_{P}\left( \frac{\sqrt{\log n}%
}{n^{1/4}h^{d/4}}\right) ,\ \mathcal{P}\text{-uniformly.}
\end{equation*}
Now the desired result of Theorem 1 follows from the fact that
\begin{eqnarray*}
	M_{n,x,\tau}^{-1}\mathbf{E}\psi_{n,x,\tau,k} = O(h^{r+1})=o\left(\frac{\log^{1/2}n}{n^{1/4}h^{d/4}}\right),
\end{eqnarray*}
which follows by Assumption QR1, (\ref{ineq991}), and Assumption QR2.
\end{proof}

As mentioned in the main text, the convergence rate in the asymptotic linear
representation is slightly faster than the rate in Theorem 2 of %
\citeasnoun{Guerre/Sabbah:12}. To see this difference closely, %
\citeasnoun{Guerre/Sabbah:12} on page 118 wrote, for fixed numbers $x$ and $%
y,$ 
\begin{equation*}
l_{\tau }(\varepsilon _{i}+x+y)-l_{\tau }(\varepsilon _{i}+x)-y\cdot \tilde{l%
}_{\tau }(\varepsilon _{i}+x)=\int_{x}^{x+y}(1\{\varepsilon _{i}\leq
t\}-1\{\varepsilon _{i}\leq 0\}dt,
\end{equation*}%
where $\varepsilon _{i}$ is a certain random variable with density function,
say, $f$ which satisfies $||f||_{\infty }<\infty $. From this, %
\citeasnoun{Guerre/Sabbah:12} proceeded as follows:%
\begin{eqnarray*}
&&\mathbf{E}\left[ \left( l_{\tau }(\varepsilon _{i}+x+y)-l_{\tau
}(\varepsilon _{i}+x)-y\cdot \tilde{l}_{\tau }(\varepsilon _{i}+x)\right)
^{2}\right] \\
&\leq &2|y|\int_{x}^{x+y}\mathbf{E}\left[ (1\{\varepsilon _{i}\leq
t\}-1\{\varepsilon _{i}\leq 0\})^{2}\right] dt \\
&\leq &2|y|||f||_{\infty }\int_{x}^{x+y}|t|dt\leq
2|y|^{2}(|x|+|y|)||f||_{\infty }.
\end{eqnarray*}%
On the other hand, this paper considers \citeasnoun{Knight:98}'s inequality
and proceeds as follows:%
\begin{eqnarray*}
&&\mathbf{E}\left[ \left( l_{\tau }(\varepsilon _{i}+x+y)-l_{\tau
}(\varepsilon _{i}+x)-y\cdot \tilde{l}_{\tau }(\varepsilon _{i}+x)\right)
^{2}\right] \\
&\leq &|y|^{2}\int_{0}^{1}\mathbf{E}\left\vert 1\{\varepsilon _{i}+x\leq
yt\}-1\{\varepsilon _{i}+x\leq 0\}\right\vert dt \\
&\leq &|y|^{2}P\{-|y|\leq \varepsilon _{i}+x\leq |y|\}\leq 2 ||f||_{\infty
}|y|^{3}.
\end{eqnarray*}%
Note that when $|y|$ is decreasing to zero faster than $|x|$, the latter
bound is an improved one. The tighter $L^{2}$ bound gives a sharper bound
when we apply the maximal inequality of \citeasnoun{Massart:07} which yields
a slightly faster error rate. (Compare Proposition A.1 of %
\citeasnoun{Guerre/Sabbah:12} with Lemma QR1 where $\delta _{1n}$ and $\delta _{2n}$ in Lemma QR1 correspond to $t_\beta$ and $t_\epsilon$ in Proposition A.1 respectively.)

\begin{proof}[Proof of Corollary 1]
First, we write%
\begin{equation*}
M_{n,\tau ,k}^{-1}(x)\psi _{n,x,\tau ,k}=M_{n,\tau ,k}^{-1}(x)\left( \tilde{\psi}_{n,x,\tau ,k}+\frac{1}{\sqrt{%
		nh^{d}}}\sum_{i=1}^{n}\left( \rho _{n,i}(x,\tau )-\mathbf{E}\left[ \rho
_{n,i}(x,\tau )\right] \right) \right),
\end{equation*}%
where
\begin{equation*}
\rho _{n,i}(x,\tau )=1\left\{ L_{i}=k\right\} \sum_{l=1}^{L_{i}}\left( 
\tilde{l}_{\tau }\left( \Delta _{x,\tau ,lk,i}\right) -\tilde{l}_{\tau
}\left( \varepsilon _{\tau ,lk,i}\right) \right) c_{h,x,i}K_{h,x,i}.
\end{equation*}%
It suffices for Corollary 1 to show that
\begin{eqnarray*}
M_{n,\tau ,k}^{-1}(x)\frac{1}{\sqrt{nh^{d}}}\sum_{i=1}^{n}\left( \rho
_{n,i}(x,\tau )-\mathbf{E}\left[ \rho _{n,i}(x,\tau )\right] \right)
= O_{P}\left( \frac{\log ^{1/2}n}{n^{1/4}h^{d/4}}\right).
\end{eqnarray*}

Using the definition in (\ref{delta_tilde}), writing $\tilde{\delta}_{x,\tau
,k,i}=\tilde{\delta}_{\tau ,k}(X_{i};x),$ and using Knight's identity, we
write%
\begin{eqnarray*}
\rho _{n,i}(x,\tau ) &=&1\left\{ L_{i}=k\right\} \sum_{l=1}^{L_{i}}\left( 
\tilde{l}_{\tau }\left( \varepsilon _{\tau ,lk,i}+\tilde{\delta}_{\tau
,k}(X_{i};x)\right) -\tilde{l}_{\tau }\left( \varepsilon _{\tau
,lk,i}\right) \right) c_{h,x,i}K_{h,x,i} \\
&=&1\left\{ L_{i}=k\right\} \tilde{\delta}_{x,\tau
,k,i}\sum_{l=1}^{L_{i}}\left( \int_{0}^{1}\left( 
\begin{array}{c}
1\left\{ \varepsilon _{\tau ,lk,i}\leq -\tilde{\delta}_{x,\tau ,k,i}s\right\}
\\ 
-1\left\{ \varepsilon _{\tau ,lk,i}\leq 0\right\}%
\end{array}%
\right) ds\right) c_{h,x,i}K_{h,x,i}.
\end{eqnarray*}%
Following the same arguments in the proof of Lemma QR1(i), we deduce that 
\begin{equation*}
M_{n,\tau ,k}^{-1}(x)\frac{1}{\sqrt{nh^{d}}}\sum_{i=1}^{n}\left( \rho
_{n,i}(x,\tau )-\mathbf{E}\left[ \rho _{n,i}(x,\tau )\right] \right)
=O_{P}\left( \frac{\log ^{1/2}n}{n^{1/4}h^{d/4}}\right) ,
\end{equation*}%
uniformly over $\tau \in \mathcal{T},\ x\in \mathcal{S}_{\tau }(\varepsilon
) $ and over $P\in \mathcal{P.}$
\end{proof}

For $z=(x,\tau )\in \mathcal{Z}$ and $a,b\in \mathbf{R}$, we define 
\begin{equation*}
\zeta _{n,x,\tau ,k}^{\ast }(a,b)\equiv
\sum_{i=1}^{n}1\{L_{i}=k\}\sum_{l=1}^{k}\left\{ 
\begin{array}{c}
l_{\tau }\left( \Delta _{x,\tau ,lk,i}^{\ast }-(a+b)^{\top }c_{h,x,i}^{\ast
}/\sqrt{nh^{d}}\right) \\ 
-l_{\tau }\left( \Delta _{x,\tau ,lk,i}^{\ast }-b^{\top }c_{h,x,i}^{\ast }/%
\sqrt{nh^{d}}\right)%
\end{array}%
\right\} K_{h,x,i}^*.
\end{equation*}%
We also define%
\begin{equation*}
\zeta _{n,x,\tau ,k}^{\Delta \ast }(a,b)\equiv \zeta _{n,x,\tau ,k}^{\ast
}(a,b)-b^{\top }\psi _{n,x,\tau ,k}^{\ast }.
\end{equation*}%
The following lemma is the bootstrap analogue of Lemma QR1.

\begin{LemmaQR}
\textit{Suppose that Assumptions QR1-QR2 hold. Let }$\{\delta
_{1n}\}_{n=1}^{\infty }$\textit{\ and }$\{\delta _{2n}\}_{n=1}^{\infty }$%
\textit{\ be positive sequences such that }$\delta _{1n}=M\sqrt{\log n}$ 
\textit{for some }$M>0$ \textit{and }$\delta _{2n}\leq \delta _{1n}$\textit{%
\ from some large }$n$\textit{\ on}. \textit{Then for each }$k\in \mathbb{N}%
_{L},$ \textit{the following holds uniformly over} $P\in \mathcal{P}$:

\noindent (i)%
\begin{eqnarray*}
&&\mathbf{E}^{\ast }\left[ \sup_{a,b:||a||\leq \delta _{1n},||b||\leq \delta
_{2n}}\sup_{\tau \in \mathcal{T},x\in \mathcal{S}_{\tau }(\varepsilon
)}|\zeta _{n,x,\tau ,k}^{\Delta \ast }(a,b)-\mathbf{E}^{\ast }[\zeta
_{n,x,\tau ,k}^{\Delta \ast }(a,b)]|\right] \\
&=&O_{P}\left( \frac{\delta _{2n}^{3/2}\sqrt{\log n}}{n^{1/4}h^{d/4}}\right) .
\end{eqnarray*}%
\noindent (ii)%
\begin{eqnarray*}
&&\sup_{a,b:||a||\leq \delta _{1n},||b||\leq \delta _{2n}}\sup_{\tau \in 
\mathcal{T},x\in \mathcal{S}_{\tau }(\varepsilon )}\left\vert \mathbf{E}%
^{\ast }[\zeta _{n,x,\tau ,k}^{\Delta \ast }(a,b)]-\frac{b^{\top }M_{n,\tau
,k}(x)(b+2a)}{2}\right\vert \\
&=&O_{P}\left( \frac{\delta _{2n}^{3/2}\sqrt{\log n}}{n^{1/4}h^{d/4}}\right) .
\end{eqnarray*}
\end{LemmaQR}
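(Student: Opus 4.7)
My plan is to mirror the proof of Lemma QR1, replacing the i.i.d. sums by bootstrap sums and decoupling them with Le Cam's Poissonization Lemma (Proposition 2.5 of \citeasnoun{Gine:97}), as the paper's remark after Theorem 2 suggests. First I would apply Knight's identity as in part (i) of Lemma QR1 so that $\zeta_{n,x,\tau,k}^{\Delta\ast}(a,b) - \mathbf{E}^{\ast}[\zeta_{n,x,\tau,k}^{\Delta\ast}(a,b)]$ is a centered bootstrap sum of $G_{n,x,\tau,k}(S_{i}^{\ast};a,b)$ over the function class $\mathcal{G}_{n}$. Since the function class is unchanged, the bracketing entropy bound $\log N_{[]}(\varepsilon,\mathcal{G}_{n},L_{p}(P))\leq C - C\log(\varepsilon/n)$ from the original proof applies verbatim.

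For part (i), I would use Poissonization to replace the bootstrap multinomial multiplicities with i.i.d.\ Poisson$(1)$ weights $N_{i}$, giving a sum $\sum_{i=1}^{n}(N_{i}-1)G_{n,x,\tau,k}(S_{i};a,b)$ that is a true independent sum conditional on the sample. Applying Massart's Theorem 6.8 to this Poissonized process, I need the envelope bound $b_{n}=\delta_{2n}/\sqrt{nh^{d}}$ (which is deterministic and carries over) and a variance bound of order $s_{n}^{2}=\delta_{2n}^{3}/(n^{3/2}h^{d/2})$. The variance bound under $P^{\ast}$ reduces to an empirical average of the corresponding $P$-variance integrand; by a standard uniform law of large numbers over the function class $\mathcal{G}_{n}$ (whose entropy is logarithmic in $n$), this empirical average is within $o_{P}(1)$ of the true variance uniformly over $P\in\mathcal{P}$, so the same $O(\delta_{2n}^{3/2}\sqrt{\log n}/(n^{1/4}h^{d/4}))$ rate in (\ref{mi}) is obtained, now in $O_{P}$ rather than deterministically.

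For part (ii), I would write $\mathbf{E}^{\ast}[\zeta_{n,x,\tau,k}^{\Delta\ast}(a,b)] = n\,\mathbf{E}^{\ast}[G_{n,x,\tau,k}(S_{1}^{\ast};a,b)] = \sum_{i=1}^{n}G_{n,x,\tau,k}(S_{i};a,b)$, which is the empirical counterpart of $\mathbf{E}[\zeta_{n,x,\tau,k}^{\Delta}(a,b)]$ considered in Lemma QR1(iii). Decomposing into the population expectation (which yields the target $b^{\top}M_{n,\tau,k}(x)(b+2a)/2$ plus a Lemma QR1(iii)-type remainder of order $\delta_{2n}\delta_{1n}^{2}/(n^{1/2}h^{d/2})$) plus a centered empirical process piece, the centered piece is of the same form treated in part (i) applied with $(a+b,\text{null})$ arguments, giving a fluctuation of order $\delta_{2n}^{3/2}\sqrt{\log n}/(n^{1/4}h^{d/4})$, which dominates the deterministic remainder under Assumption QR2(ii) and matches the stated bound.

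The main obstacle will be making the variance-transfer step precise uniformly in $P\in\mathcal{P}$: the bootstrap variance of $G_{n,x,\tau,k}(S_{i}^{\ast};a,b)$ is a random quantity depending on the empirical measure, so I must show that, uniformly over the indexing set $(a,b,x,\tau)$ and uniformly over $P$, it is bounded (with high probability) by $C s_{n}^{2}$. This requires a uniform bound on the empirical process indexed by the squared-function class $\{G_{n,x,\tau,k}(\cdot;a,b)^{2}\}$, which can be handled by another application of Massart's inequality together with the envelope bound $b_{n}^{2}$ in (\ref{bd344}) and the same entropy estimate. Once this is in place, the rest of the Poissonization argument and the subsequent application of Massart's inequality proceed in direct parallel with the original Lemma QR1.
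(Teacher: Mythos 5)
Your overall architecture (Knight's identity, the unchanged function class $\mathcal{G}_{n}$ with its entropy bound, Le Cam's Poissonization, Massart's inequality, and reuse of Lemma QR1(iii) for part (ii)) matches the paper's, and part (ii) of your argument is essentially identical to the paper's decomposition $\mathbf{E}^{\ast }[\zeta ^{\Delta \ast }]=\left( \mathbf{E}^{\ast }[\zeta ^{\Delta \ast }]-\mathbf{E}[\zeta ^{\Delta }]\right) +\mathbf{E}[\zeta ^{\Delta }]$, where the first bracket is the centered empirical process controlled by Lemma QR1(i). The one genuine difference is in part (i): you propose to apply Massart's inequality \emph{conditionally} on the sample to the Poissonized process $\sum_{i}(N_{i}-1)G_{n,x,\tau ,k}(S_{i};a,b)$, which forces you to transfer the (random) empirical variance to the population variance uniformly over the index set and over $P$ --- the step you correctly flag as the main obstacle, and which you would resolve with a second maximal-inequality argument for the squared class. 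The paper avoids this obstacle entirely by bounding the \emph{unconditional} double expectation $\mathbf{E}\left[ \mathbf{E}^{\ast }[\sup |\cdot |]\right]$: after Poissonization it splits the process into $\sum_{i}(N_{i}-1)(g-\mathbf{E}g)$ plus the cross term $\mathbf{E}\left\vert \sum_{i}(N_{i}-1)\right\vert \times \mathbf{E}\left[ \sup \left\vert n^{-1}\sum_{i}(g-\mathbf{E}g)\right\vert \right] =O(\sqrt{n})\times O(n^{-1}\delta _{2n}^{3/2}\sqrt{\log n}/(n^{1/4}h^{d/4}))$, so that all moments entering Massart's bound are computed under the true $P$ and the $O_{P}$ conclusion follows by Markov's inequality. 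Both routes reach the stated rate; the paper's is shorter because it needs no uniform law of large numbers for the variance, while yours yields a conditional (in-probability) bound directly at the cost of that extra step.
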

Note that the convergence rate in Lemma QR1(ii) is slower than that in Lemma QR1(iii).

\begin{proof}[Proof of Lemma QR2]
(i) Similarly as in the proof of Lemma QR1(i), we rewrite $\zeta _{n,x,\tau
,k}^{\Delta \ast }(a,b)-\mathbf{E}^{\ast }[\zeta _{n,x,\tau ,k}^{\Delta \ast
}(a,b)]$ as%
\begin{equation*}
\sum_{i=1}^{n}\left\{ G_{n,x,\tau ,k}(S_{i}^{\ast };a,b)-\mathbf{E}\left[
G_{n,x,\tau ,k}(S_{i}^{\ast };a,b)\right] \right\} ,
\end{equation*}%
where $S_{i}^{\ast }=(Y_{i}^{\ast \top },X_{i}^{\ast \top })^{\top }$. Let $%
\pi =(x,\tau ,s,a,b)$ and $\Pi _{n}=\mathcal{S}(\varepsilon )\times \mathcal{%
T}\times \lbrack 0,1]\times \lbrack -\delta _{1n},\delta _{1n}]^{r+1}\times
\lbrack -\delta _{2n},\delta _{2n}]^{r+1}$, where $\mathcal{S}(\varepsilon
)=\{(x,\tau )\in \mathcal{X}\times \mathcal{T}:x\in \mathcal{S}_{\tau
}(\varepsilon )\}$. Using Proposition 2.5 of \citeasnoun{Gine:97},%
\begin{eqnarray*}
&&\mathbf{E}\left[ \mathbf{E}^{\ast }\left[ \sup_{a,b:||a||\leq \delta
_{1n},||b||\leq \delta _{2n}}\sup_{\tau \in \mathcal{T},x\in \mathcal{S}%
_{\tau }(\varepsilon )}|\zeta _{n,x,\tau ,k}^{\Delta \ast }(a,b)-\mathbf{E}%
^{\ast }[\zeta _{n,x,\tau ,k}^{\Delta \ast }(a,b)]|\right] \right] \\
&\leq &C\mathbf{E}\left[ \mathbf{E}_{N_{i}}\left( \sup_{\pi \in \Pi
_{n}}\left\vert \sum_{i=1}^{n}(N_{i}-1)\left\{ g_{n,x,\tau ,k}(S_{i};s,b,a)-%
\frac{1}{n}\sum_{i=1}^{n}g_{n,x,\tau ,k}(S_{i};s,b,a)\right\} \right\vert
\right) \right] ,
\end{eqnarray*}%
where $\{N_{i}\}_{i=1}^{n}$ are i.i.d. Poisson random variables with mean $1$
independent of $\{(Y_{i}^{\top },X_{i}^{\top })^{\top }\}_{i=1}^{\infty }$, $%
\mathbf{E}_{N_{i}}$ denotes expectation only with respect to the
distribution of $\{N_{i}\}_{i=1}^{n}$, and $g_{n,x,\tau ,k}(\cdot ;s,b,a)$ is as
defined in the proof of Lemma QR1(i). Here the constant $C>0$ does not
depend on $P\in \mathcal{P}$. We can bound the above by 
\begin{eqnarray*}
&&C\mathbf{E}\left[ \sup_{\pi \in \Pi _{n}}\left\vert
\sum_{i=1}^{n}(N_{i}-1)\left( g_{n,x,\tau ,k}(S_{i};s,b,a)-\mathbf{E}\left[
g_{n,x,\tau ,k}(S_{i};s,b,a)\right] \right) \right\vert \right] \\
&&+C\mathbf{E}\left( \left\vert \sum_{i=1}^{n}(N_{i}-1)\right\vert \right)
\times \mathbf{E}\left( \sup_{\pi \in \Pi _{n}}\left\vert \frac{1}{n}%
\sum_{i=1}^{n}g_{n,x,\tau ,k}(S_{i};s,b,a)-\mathbf{E}\left[ g_{n,x,\tau
,k}(S_{i};s,b,a)\right] \right\vert \right) .
\end{eqnarray*}%
The leading expectation is bounded by $O(\delta _{2n}\sqrt{\log n}%
/(n^{1/4}h^{d/4}))$ similarly as in the proof of Lemma QR1(i). And the
product of the two expectations in the second term is bounded by%
\begin{eqnarray*}
&&O(\sqrt{n})\times \frac{1}{n}\mathbf{E}\left( \sup_{\pi \in \Pi _{n}}\left\vert \sum_{i=1}^{n}\left\{ g_{n,x,\tau ,k}(S_{i};s,b,a)-\mathbf{E}\left[
g_{n,x,\tau ,k}(S_{i};s,b,a)\right] \right\} \right\vert \right) \\
&=&O\left( \delta _{2n}^{3/2}\sqrt{\log n}/(n^{3/4}h^{d/4})\right) ,
\end{eqnarray*}%
where the constant $C>0$ does not depend on $P\in \mathcal{P}$, and the last
equality follows similarly as in the proof of Lemma QR1(i).

\noindent (ii) Note that
\begin{equation}
\mathbf{E}^{\ast }[\zeta _{n,x,\tau ,k}^{\Delta \ast }(a,b)]=\mathbf{E}%
^{\ast }[\zeta _{n,x,\tau ,k}^{\Delta \ast }(a,b)]-\mathbf{E}[\zeta
_{n,x,\tau ,k}^{\Delta }(a,b)]+\mathbf{E}[\zeta _{n,x,\tau ,k}^{\Delta
}(a,b)].  \label{dec5}
\end{equation}%
The difference between the first two terms on the right hand side is 
\begin{equation*}
O_{P}\left( \frac{\delta _{2n}^{3/2}\sqrt{\log n}}{n^{1/4}h^{d/4}}\right),
\end{equation*}%
uniformly in $P\in \mathcal{P}$, as we have seen in (i). We apply Lemma
QR1(iii) to the last expectation in (\ref{dec5}) to obtain the desired
result.
\end{proof}

\begin{proof}[Proof of Theorem 2]
The proof is completed by using Lemma QR2 precisely in the same way as the
proof of Theorem 1 used Lemma QR1. While the convergence rate in Lemma QR2(ii) is slower than that in Lemma QR1(iii), we obtain the same convergence rate in the bootstrap version of (\ref{rate2}). Details are omitted.
\end{proof}

\begin{LemmaMIQ}
(i) \textit{Suppose that the conditions of Theorem 3(i) hold. Then
Asumptions A1-A3, A5-A6, and B1-B4 in LSW hold with the following
definitions:} $J=1,\ r_{n}\equiv \sqrt{nh^{3}},$%
\begin{eqnarray*}
v_{n,\tau }(x) &\equiv &\mathbf{e}_{2}^{\top }\gamma _{\tau }(x),\mathit{\
and} \\
\beta _{n,x,\tau }(Y_{i},z) &\equiv &-\tilde{l}_{\tau }\left( Y_{i}-\gamma
_{\tau }^{\top }(x)\cdot H\cdot c\left( z\right) \right) \mathbf{e}%
_{2}^{\top }M_{n,\tau }^{-1}(x)c\left( z\right) K\left( z\right) \text{.}
\end{eqnarray*}%
\newline
(ii) \textit{Suppose that the conditions of Theorem 3(ii) hold. Then
Asumptions A1-A3, A5-A6, and B1-B4 in LSW hold with the following
definitions:} $J=1,\ r_{n}\equiv \sqrt{nh^{3}},$%
\begin{eqnarray*}
v_{n,\tau }(x) &\equiv &\mathbf{e}_{2}^{\top }\{\gamma _{\tau
_{1}}(x)-\gamma _{\tau _{2}}(x)\},\ \mathit{and} \\
\beta _{n,x,\tau }(Y_{i},z) &\equiv &\alpha _{n,x,\tau _{1}}(Y_{i},z)-\alpha
_{n,x,\tau _{2}}(Y_{i},z)\text{,}
\end{eqnarray*}%
\textit{where} \textit{the set }$\mathcal{T}$\textit{\ in LSW} \textit{is
replaced by }$\mathcal{T}\times \mathcal{T}$\textit{\ here, and}%
\begin{equation*}
\alpha _{n,x,\tau }(Y_{i},z)\equiv -\tilde{l}_{\tau }\left( Y_{i}-\gamma
_{\tau }^{\top }(x)\cdot H\cdot c\left( z\right) \right) \mathbf{e}%
_{2}^{\top }M_{n,\tau }^{-1}(x)c\left( z\right) K\left( z\right) .
\end{equation*}
\end{LemmaMIQ}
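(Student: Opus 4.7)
The plan is to deduce Lemma MIQ as a straightforward consequence of Corollary~1 together with the smoothness, kernel, and bandwidth conditions in Assumptions MON1--MON2 (resp.\ IQM1--IQM2), by reading off each LSW assumption in turn. I will treat part~(i) in detail, since part~(ii) follows by a simple linearity argument applied to each of $\tau_1,\tau_2$ separately.

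First I would specialize Theorem~1 and Corollary~1 to the present setting: $d=1$, $\bar L=1$, and the generic variables $(B_{\ell i},X_i,L_i)$ collapse to $(Y_i,X_i)$. Under this specialization Assumptions QR1--QR2 are implied by MON1--MON2 (MON1(iii) gives QR1(iii) with $r>3/2$ so that $\sqrt{nh^3}\,h^{r+1}\to0$ by MON2(ii), and all the other parts match verbatim). Corollary~1 then gives, $\mathcal{P}$-uniformly,
\begin{equation*}
\sqrt{nh}\,H(\hat\gamma_\tau(x)-\gamma_\tau(x))
= M_{n,\tau}^{-1}(x)\,\tilde\psi_{n,x,\tau}
+ O_P\!\left(\frac{\log^{1/2}n}{n^{1/4}h^{1/4}}\right).
\end{equation*}
Pre-multiplying by $\mathbf{e}_2^\top H^{-1}/\sqrt{nh}$ and recalling that $\hat g_\tau(x)=\mathbf{e}_2^\top\hat\gamma_\tau(x)$, $v_{n,\tau}(x)=\mathbf{e}_2^\top\gamma_\tau(x)$, and $\mathbf{e}_2^\top H^{-1}=h^{-1}\mathbf{e}_2^\top$, this rearranges, after the change of variables $z=(X_i-x)/h$, into
\begin{equation*}
\sqrt{nh^3}\bigl(\hat g_\tau(x)-v_{n,\tau}(x)\bigr)
= \frac{1}{\sqrt n}\sum_{i=1}^n \beta_{n,x,\tau}(Y_i,(X_i-x)/h)
+ o_P(1),
\end{equation*}
$\mathcal{P}$-uniformly in $(x,\tau)\in\mathcal{S}_\tau(\varepsilon)\times\mathcal{T}$, which is precisely the LSW asymptotic-linearization form with $r_n=\sqrt{nh^3}$ and the $\beta_{n,x,\tau}$ displayed in the statement.

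Having this representation in hand, I would verify each LSW assumption:\ A1--A3 (smoothness/boundedness of the tested functional $v_{n,\tau}$ in $(x,\tau)$ uniformly in $P$) follow directly from MON1(iii)--(iv), which make $\gamma_\tau$ and hence $v_{n,\tau}$ BD$(\mathcal S,\varepsilon,r+1)$ and Lipschitz in $\tau$. A5--A6 (bias rate dominated by $r_n^{-1}$ and the uniform rate of the linear approximation) follow from the Bahadur remainder $O_P(\log^{1/2}n/(n^{1/4}h^{1/4}))$ together with MON2(ii): with $r>3/2$ one has $\sqrt{nh^3}\,h^{r+1}\to 0$ and $\log^{1/2}n/(h\,n^{1/4}h^{1/4})=o(1)$. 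For B1--B4, the influence function $\beta_{n,x,\tau}(y,z)$ is a product of (a) the bounded factor $\tilde l_\tau$, (b) the kernel $K(z)c(z)$, which is compactly supported and Lipschitz by QR2(i), and (c) $\mathbf{e}_2^\top M_{n,\tau}^{-1}(x)$, which by MON1(i)--(ii) is bounded and Lipschitz in $(x,\tau)$ uniformly in $P$ (since $M_{n,\tau}(x)$ is a kernel-smoothed product of quantities bounded away from zero). The moment conditions (order-$2+\nu$) required by LSW follow from MON2(ii) through the usual kernel-scaling argument, and the envelope/bracketing entropy bounds needed by B3--B4 follow from the same Lipschitz structure used in the proof of Lemma~QR1(i) and Assumption MON1(iv), which allows absorbing the $\tau$-variation into a polynomial cover.

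For part~(ii), I would use the decomposition $\hat g_{\tau_2}(x)-\hat g_{\tau_1}(x)-v_{n,\tau}(x)=[\hat g_{\tau_2}(x)-v_{n,\tau_2}(x)]-[\hat g_{\tau_1}(x)-v_{n,\tau_1}(x)]$, where $v_{n,\tau_2}(x)=\mathbf{e}_2^\top\gamma_{\tau_2}(x)$, etc.\ Applying part~(i) componentwise yields the linearization with influence function $\beta_{n,x,\tau}(Y_i,z)=\alpha_{n,x,\tau_1}(Y_i,z)-\alpha_{n,x,\tau_2}(Y_i,z)$. The verification of A1--A3, A5--A6, and B1--B4 then follows from the corresponding facts for each $\alpha_{n,x,\tau_j}$, after replacing the index $\mathcal T$ by $\mathcal T^2$; this enlargement is harmless because the entropy bound only loses a polynomial-in-$n$ factor absorbed by $\log n$. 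The main (though not deep) obstacle is the bookkeeping in B3--B4: one must show that the class $\{\beta_{n,x,\tau}:(x,\tau)\in\mathcal S(\varepsilon)\times\mathcal T\}$ admits uniform-in-$P$ entropy and variance bounds of the kind needed by LSW, which I would establish by the same Lipschitz-in-$(\tau,x)$ argument used in the proof of Lemma~QR1(i), combined with the compact-support Lipschitz structure of $K$ and the uniform lower bound on $M_{n,\tau}(x)$ provided by MON1(ii).
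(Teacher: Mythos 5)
Your overall strategy for the A-series conditions---read off each LSW assumption from Corollary 1 together with MON1--MON2---is the same as the paper's. In particular, your derivation of the linear representation with $r_n=\sqrt{nh^{3}}$ by premultiplying the Bahadur representation by $\mathbf{e}_2^{\top}$ (the second diagonal entry of $H$ being $h$) is exactly how the paper obtains Assumption A1, and your use of $r>3/2$ together with MON2(ii) to make the bias and the Bahadur remainder negligible matches the paper's observation that the error rate must be $o_P(h^{1/2})$. Assumption A2 from the compact support of $K$, and A6 from the boundedness of $\beta_{n,x,\tau}$, are also handled the same way.

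The genuine gap is in your treatment of B1--B4. In LSW these are the \emph{bootstrap} counterparts of the A-series conditions, not additional moment or entropy conditions on the influence-function class: B1 requires a uniform asymptotic linear representation for the bootstrap estimator $\hat{g}^{\ast}_{\tau}(x)-\hat{g}_{\tau}(x)$, and B3 concerns the bootstrap scale normalization (the paper disposes of A5 and B3 trivially by taking $\hat{\sigma}_{\tau,j}=\hat{\sigma}^{\ast}_{\tau,j}=\sigma_{n,\tau,j}=1$). The paper verifies B1 by invoking Lemma QR2, i.e., the bootstrap uniform Bahadur representation underlying Theorem 2; this is precisely why the paper develops the bootstrap version of the Bahadur expansion at all. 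Your proposal never mentions Theorem 2, Lemma QR2, or the bootstrap sample, so as written it cannot establish the conditions that make the bootstrap critical values $c^{\ast}_{2,\alpha,\eta}$ and $c^{\ast}_{\Delta,\alpha,\eta}$ uniformly valid. A secondary, mostly labeling, issue: the paper verifies A3 and B2 via Lemma 2 of LSW combined with the Lipschitz conditions on $f_{\cdot}(\cdot|\cdot)$ and $\gamma_{\cdot}(\cdot)$ from MON1(iv) --- these are complexity conditions on the class $\{\beta_{n,x,\tau}\}$, and the entropy argument you place under ``B3--B4'' in fact supplies the substance of that verification, just attached to the wrong assumption labels.
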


\begin{proof}[Proof of Lemma MIQ]
(i) First, Assumption A1 in LSW follows from Theorem 1, with the error rate
in the asymptotic linear representation fulfills the rate $o_{P}(h^{1/2})$
 by the condition: $r>3/2$. Assumption A2 follows
because $\beta _{n,x,\tau }(Y_{i},z)$ has a multiplicative component of $%
K(z) $ having a compact support. As for Assumption A3, we can  use
Lemma 2 in LSW in combinations of Lipschitz continuity of $f_{\cdot }(\cdot
|\cdot )$ and $\gamma _{\cdot }(\cdot )$ to verify the assumption. 
As we take $\hat{\sigma}_{\tau ,j}(x)=\hat{\sigma%
}_{\tau ,j}^{\ast }(x)=1$, Assumptions A5 and B3 are trivially satisfied
with the choice of $\sigma _{n,\tau ,j}(x)=1$.$\ $Assumption A6(i) is
satisfied because $\beta _{n,x,\tau ,j}$ is bounded. Assumptions Assumption
B1 follows by Lemma QR2, and Assumption B2 by Lemma 2 in LSW. Assumption B4
follows by Assumption MON2(ii).
(ii) The proof is similar and  details are omitted.
\end{proof}

\begin{proof}[Proof of Theorem 3]
The results follow from Theorem 1 from LSW combined with Lemma MIQ1.
Details are omitted.
\end{proof}

\bibliographystyle{econometrica}
\bibliography{LSW_1Aug2015}

\clearpage

\begin{table}[tbph]
\caption{Results of Monte Carlo experiments}
\label{table1}
\begin{center}
\begin{tabular}{ccccccccc}
\hline\hline
& \multicolumn{3}{c}{Null model} & \multicolumn{3}{c}{Alternative Model 1} & 
&  \\ 
Bandwidth & \multicolumn{3}{c}{Nominal level} & \multicolumn{3}{c}{Nominal
level} &  &  \\ 
($h$) & 0.10 & 0.05 & 0.01 & 0.10 & 0.05 & 0.01 &  &  \\ \hline
&  &  &  &  &  &  &  &  \\ 
0.9 & 0.111 & 0.057 & 0.020 & 0.995 & 0.975 & 0.780 &  &  \\ 
1.0 & 0.100 & 0.048 & 0.007 & 0.980 & 0.920 & 0.660 &  &  \\ 
1.1 & 0.077 & 0.036 & 0.005 & 0.905 & 0.755 & 0.375 &  &  \\ \hline
& \multicolumn{3}{c}{Alternative Model 2} & \multicolumn{3}{c}{Alternative
Model 3} &  &  \\ 
Bandwidth & \multicolumn{3}{c}{Nominal level} & \multicolumn{3}{c}{Nominal
level} &  &  \\ 
($h$) & 0.10 & 0.05 & 0.01 & 0.10 & 0.05 & 0.01 &  &  \\ \hline
&  &  &  &  &  &  &  &  \\ 
0.9 & 0.985 & 0.965 & 0.800 & 0.990 & 0.970 & 0.660 &  &  \\ 
1.0 & 1.000 & 0.995 & 0.935 & 1.000 & 0.990 & 0.820 &  &  \\ 
1.1 & 1.000 & 1.000 & 0.985 & 1.000 & 0.990 & 0.835 &  &  \\ \hline
& \multicolumn{3}{c}{Alternative Model 4} & \multicolumn{3}{c}{Alternative
Model 5} &  &  \\ 
Bandwidth & \multicolumn{3}{c}{Nominal level} & \multicolumn{3}{c}{Nominal
level} &  &  \\ 
($h$) & 0.10 & 0.05 & 0.01 & 0.10 & 0.05 & 0.01 &  &  \\ \hline
&  &  &  &  &  &  &  &  \\ 
0.9 & 1.000 & 0.960 & 0.540 & 0.995 & 0.980 & 0.845 &  &  \\ 
1.0 & 0.885 & 0.645 & 0.175 & 0.995 & 0.995 & 0.985 &  &  \\ 
1.1 & 0.310 & 0.120 & 0.010 & 0.995 & 0.990 & 0.935 &  &  \\ \hline\hline
\end{tabular}
\end{center}
\end{table}


\begin{figure}[htbp]
\caption{True Function and Simulated Data}
\label{figure1}
\begin{center}
\makebox{
\includegraphics[origin=bl,scale=.37,angle=90]{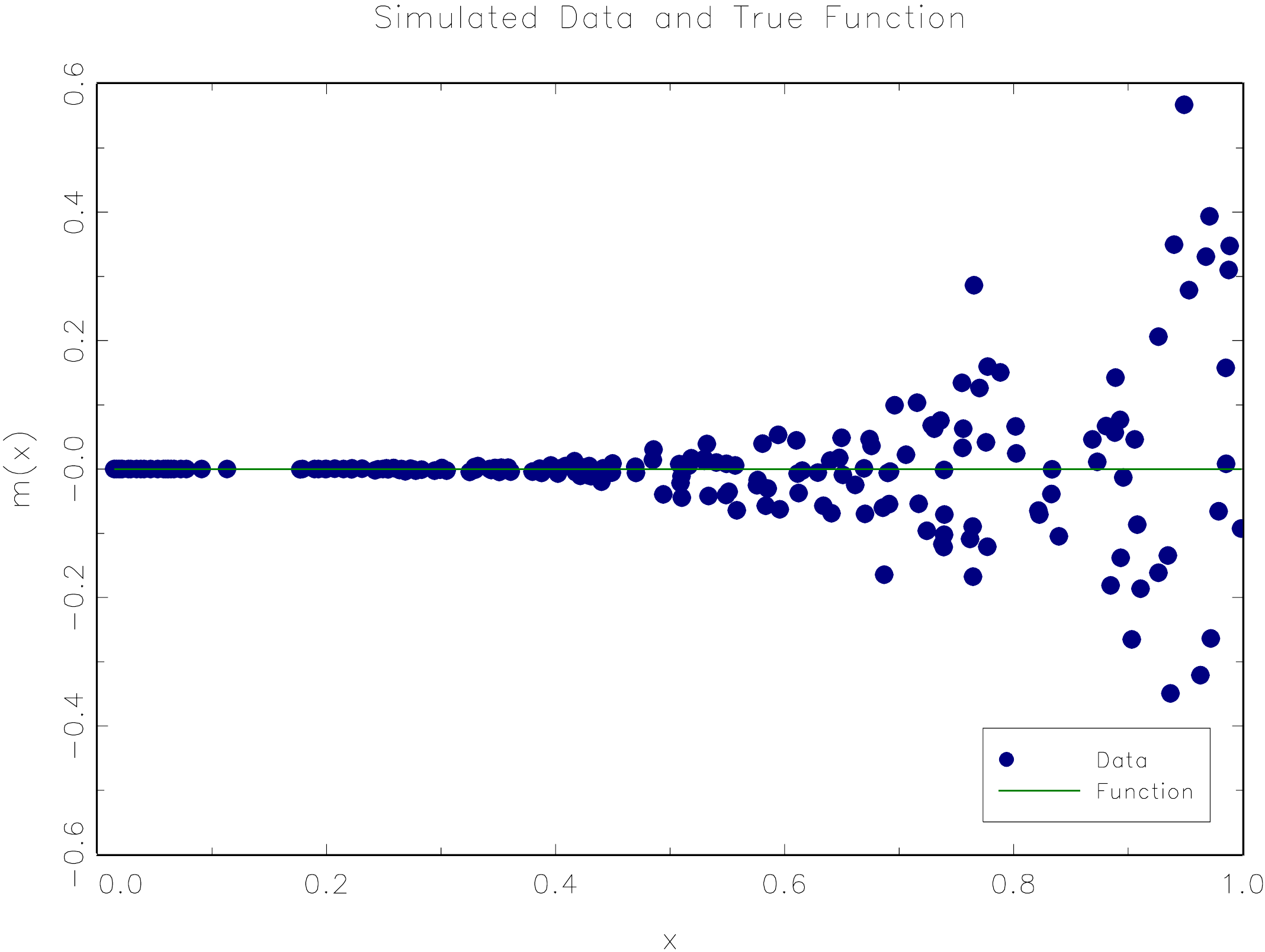}
}
\makebox{
\includegraphics[origin=bl,scale=.37,angle=90]{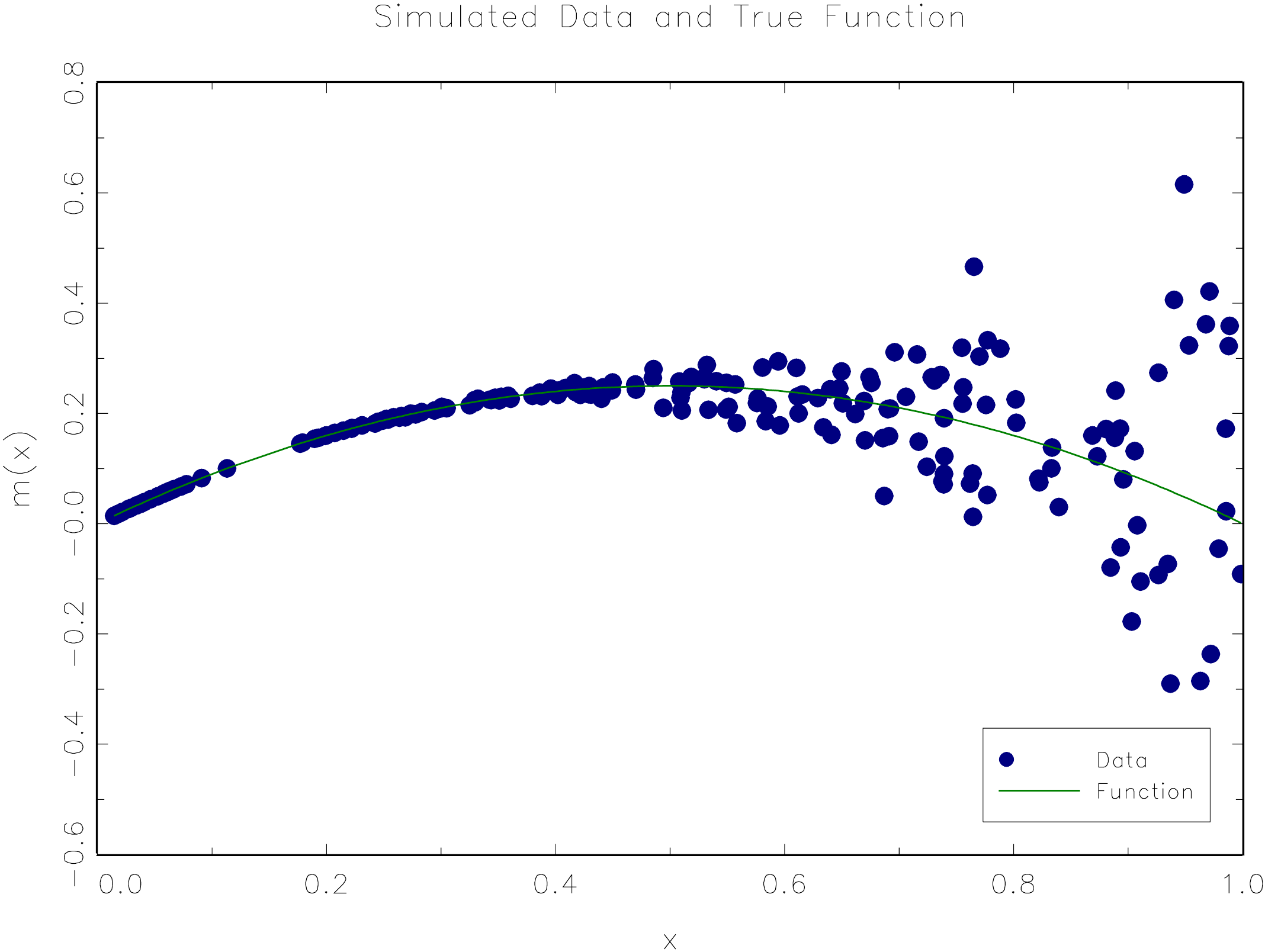}
}
\makebox{
\includegraphics[origin=bl,scale=.37,angle=90]{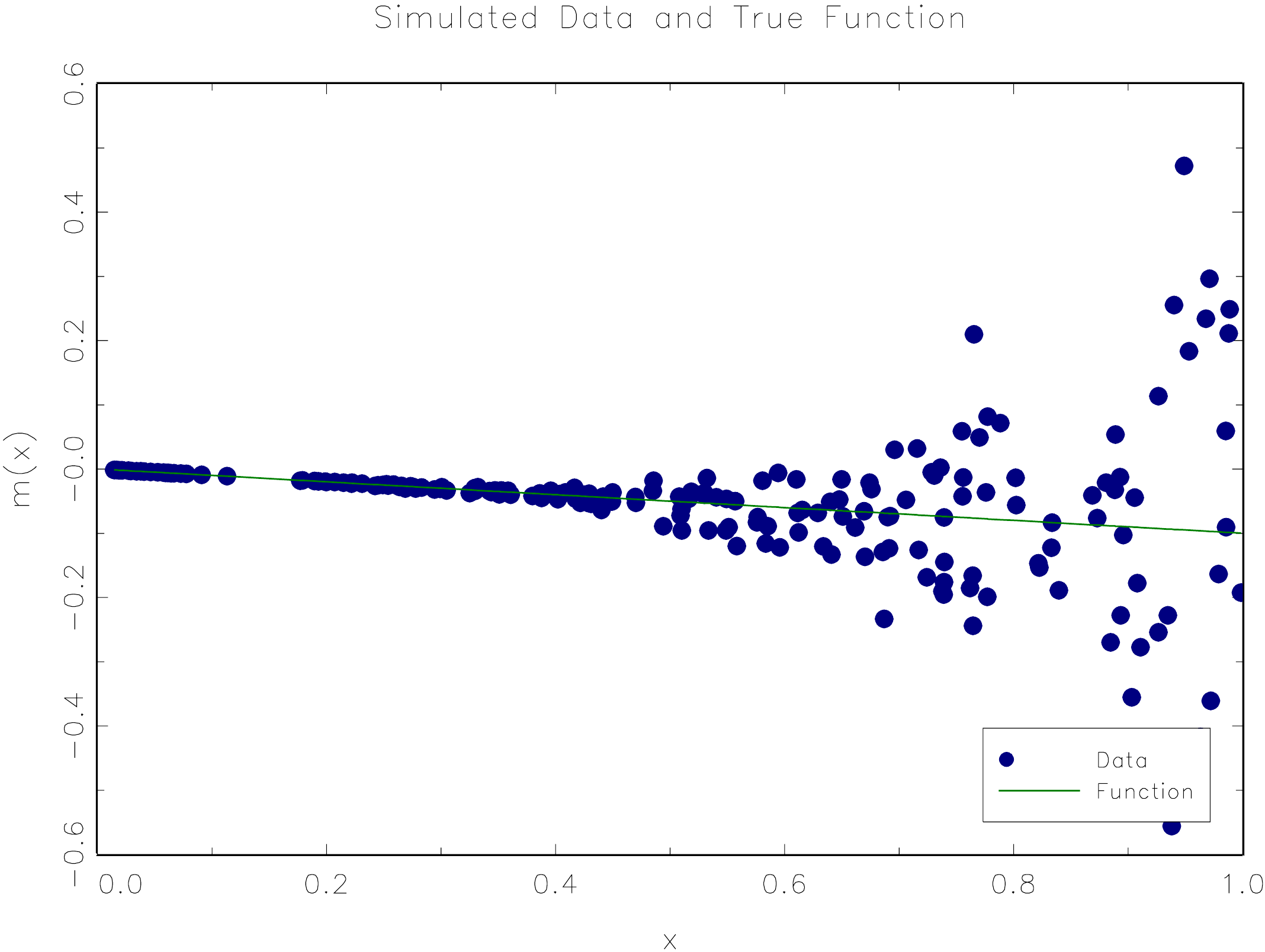}
}
\end{center}
\parbox{6in}{Note:  Each figure shows the true function and simulated data $\{(Y_i,X_i):1=1,\ldots,n=100 \}$ being generated from $Y_i = m_j(X_i) + U_i$, where
$X\sim \text{Unif}[0,1]$ and $U_i \sim X^4 \times \mathbf{N}(0,0.1^2)$, and $m_0(x) \equiv 0$, $m_1(x) = x(1-x)$, and $m_2(x) = -0.1x$, respectively.}
\end{figure}

\begin{figure}[htbp]
\caption{True Function and Simulated Data}
\label{figure2}
\begin{center}
\makebox{
\includegraphics[origin=bl,scale=.37,angle=90]{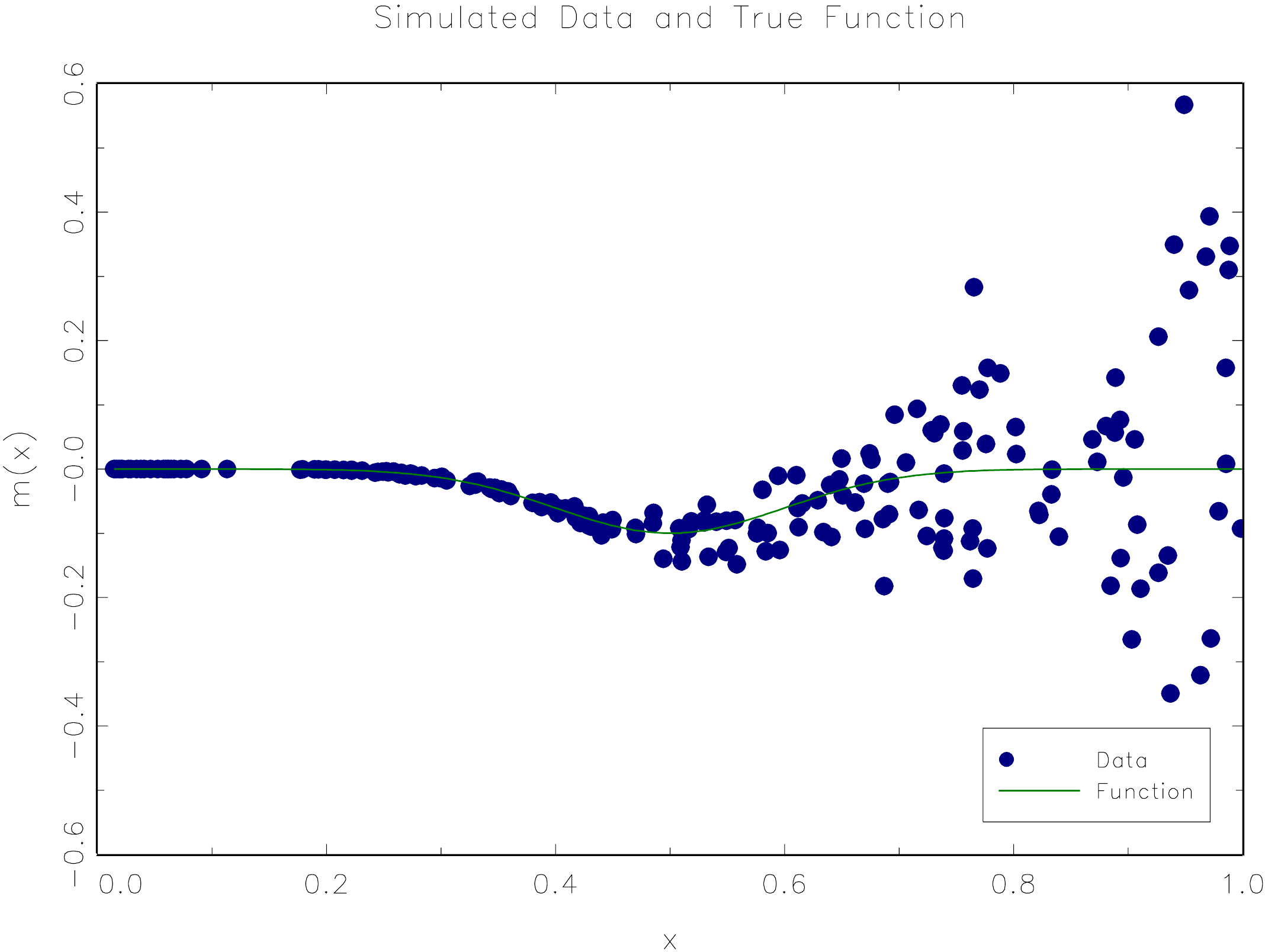}
}
\makebox{
\includegraphics[origin=bl,scale=.37,angle=90]{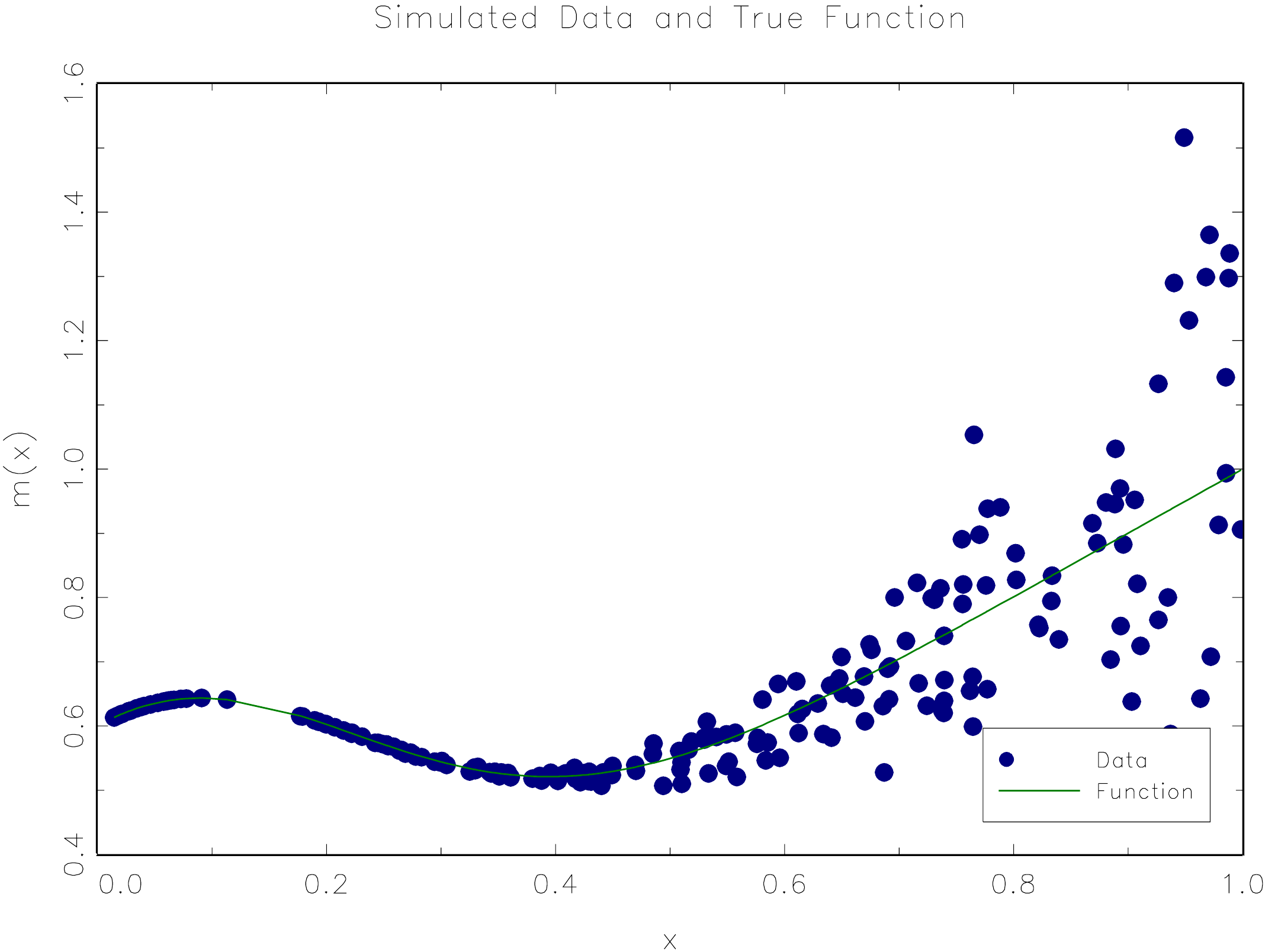}
}
\makebox{
\includegraphics[origin=bl,scale=.37,angle=90]{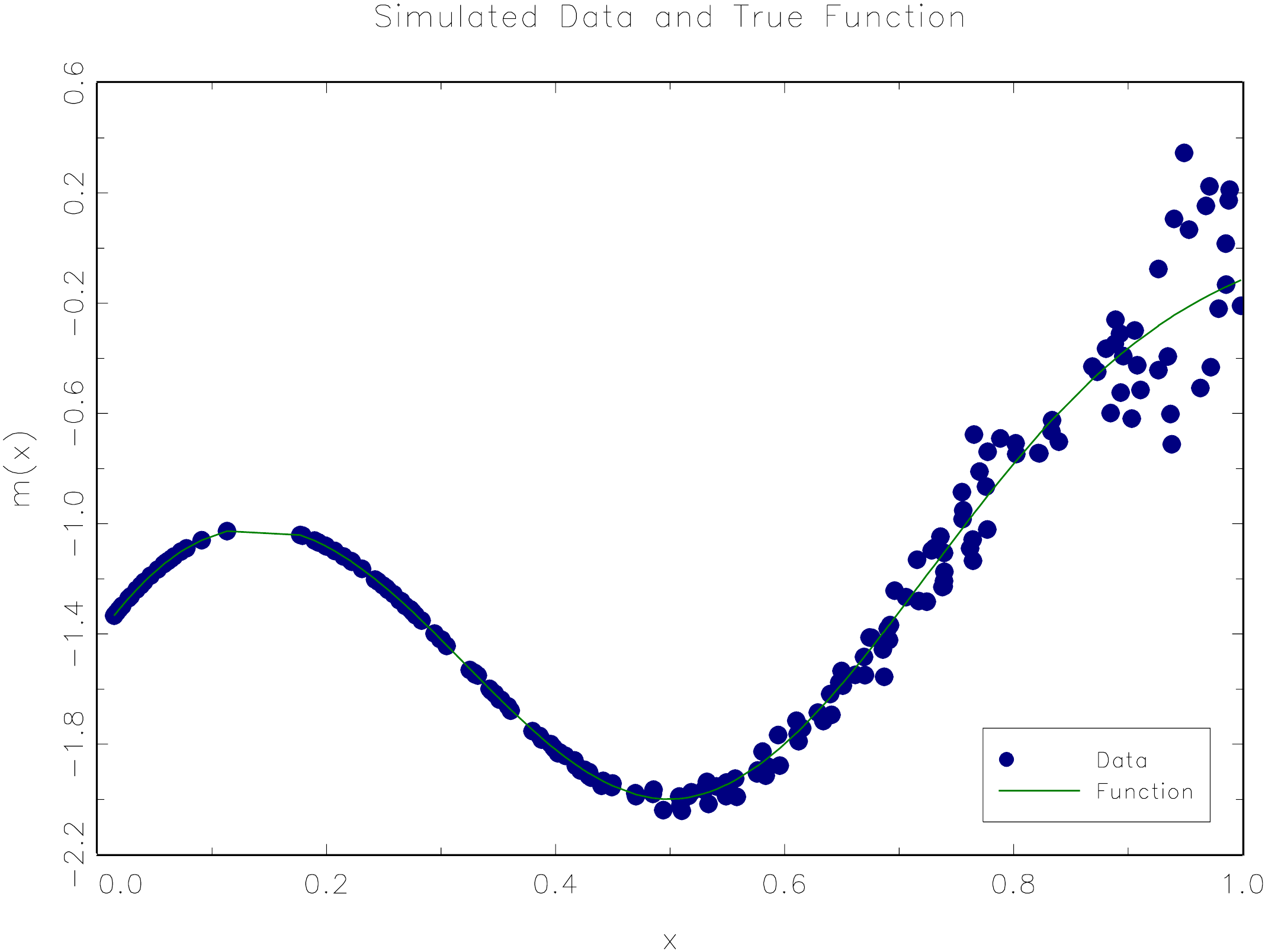}
}
\end{center}
\parbox{6in}{Note:  Each figure shows the true function and simulated data $\{(Y_i,X_i):1=1,\ldots,n=100 \}$ being generated from $Y_i = m_j(X_i) + U_i$, where
$X\sim \text{Unif}[0,1]$ and $U_i \sim X^4 \times \mathbf{N}(0,0.1^2)$, and $m_3(x) = -0.1 \exp(-50(x-0.5)^2)$, $m_4(x) = x + 0.6\exp(-10x^2)$, and $m_5(x) = [10((x-0.5)^3) - 2\exp(-10((x-0.5)^2))]1( x < 0.5) 
          + [0.1(x-0.5) - 2\exp(-10((x-0.5)^2))]1(x \geq 0.5)$, respectively.}
\end{figure}

\end{document}